\newcommand{\thm}[1]{Theorem~\ref{#1}}
\newcommand{\lem}[1]{Lemma~\ref{#1}}
\newcommand{\N}{\ensuremath{\mathbb{N}}}   
\newcommand{\Z}{\ensuremath{\mathbb{Z}}}   
\newcommand{\R}{\ensuremath{\mathbb{R}}}   
\newcommand{\T}{\ensuremath{\mathbb{T}}}   
\renewcommand{\S}{\ensuremath{\mathbb{S}}}
\def \a {\alpha}
\def \b {\beta}
\def \d {\delta}
\def \D {\Delta}
\def \e {\varepsilon}
\def \f {\varphi}
\def \k {\kappa}
\def \l {\lambda}
\def \n {\nabla}
\def \s {\sigma}
\def \th {\theta}
\def \O {\Omega}
\def \cF {\mathcal{F}}
\newcommand{\<}{\langle}
\renewcommand{\>}{\rangle}
\def \p {\partial}
\def \ra {\rightarrow}
\def \ss {\subset}
\def \h {\mathfrak{h}}
\def \D{\mathcal{D}}
\def \cH {\mathcal{H}}
\DeclareMathOperator{\diam}{diam} %
\DeclareMathOperator{\diver}{div} %
\DeclareMathOperator{\dist}{dist} %
\DeclareMathOperator{\tr}{Tr} %
\newcommand{\rest}[2]{#1\raisebox{-0.3ex}{\mbox{$\mid_{#2}$}}}
\def \oe {\overline{\e}}
\def \ue {\underline{\e}}
\title{Euler equations and turbulence: analytical approach to intermittency}
\author{A. Cheskidov\thanks{Department of Mathematics, Statistics, and Computer Science,
University of Illinois at Chicago, (M/C 249)
851 S. Morgan Street
Chicago, IL 60607-7045 ({\tt acheskid@uic.edu}). The work of A. Cheskidov is partially supported by NSF grant DMS--1108864} \and R. Shvydkoy \thanks{Same address ({\tt shvydkoy@uic.edu}). R. Shvydkoy acknowledges support from NSF grants DMS--0907812, DMS-1210896} }
\begin{document}

\maketitle

\begin{abstract}
Physical models of intermittency in fully developed turbulence employ many phenomenological concepts such as active volume, region, eddy, energy accumulation set, etc, used to describe non-uniformity of the energy cascade. In this paper we give those notions a precise mathematical meaning in the language of the Littlewood-Paley analysis. We further use our definitions to recover scaling laws for the energy spectrum and second order structure function with proper intermittency correction. 

\end{abstract}

\begin{AMS} 76F02, 35Q31  \end{AMS}

\section{Introduction}

According to the classical Kolmogorov theory of 1941, the increments of a velocity field of a fully developed turbulent flow are statistically self-similar: $\< \d v(\l \ell) \> \sim \l^{1/3} \< \d v(\ell) \>$, for separation scales $\ell$ small compared to the integral scale $L$. Combined with universality assumptions and the law of non-vanishing mean rate of energy dissipation, $\e_\nu \ra \e >0$, the theory makes predictions to the scaling of longitudinal structure functions, $S_p(\ell) =\< (\d v_{\|}(\ell))^p\> = C_p \e^{p/3}\ell^{p/3}$, and implies the famous law of $5/3$'rds for the energy spectrum: $E(\k) = K_0 \e^{2/3}\k^{-5/3}$ (see \cite{Frisch}). In a broad sense, intermittency is defined to be a deviation from the classical K41 laws. First observations of intermittency go back to Anselmet et al \cite{anselmet}, registering sub-Kolmogorov exponents for the $6^{th}$ order structure function $S_6 \propto \ell^{\zeta_p}$, $\zeta_p \approx 1.8< 6/3=2$. A detailed summary of early experimental evidence is given in these texts \cite{Frisch,my,mcc}. Most recent DNSs performed on the Earth Simulator by Kaneda et all \cite{kaneda} detect values $\zeta_2 \approx 2/3 + 0.067 \pm 0.004$, and a steeper than K41 energy spectrum $E(\k)  \propto\k^{-5/3-0.10}$. One observes similar corrections in skewness, flatness factors, and other statistical quantities. Due to intermittency, deterministic counterparts of  statistical laws appear in a form of a bound rather than exact equality as, for example, in Constantin et al \cite{const-feff,const-LP,cnt}.  Foias
remarks in \cite{Foias} that ``The rigorous mathematical reformulation of the heuristic estimates provided by some widely accepted turbulence theories always
seem to represent the upper bound of the rigorous mathematical estimates.''

One reason, which may not be exhaustive, why intermittency occurs in turbulence  is attributed to spatial non-uniformity of energy transfer from larger to smaller scales. The energy in this scenario accumulates on a set of dimension $D<3$ to be processed afterwards by viscous mechanisms into heat. One can define $D$ by introducing rarefaction into the classical phenomenological model of Richardson cascade.  Such models were discussed, for example, in \cite{fsn,ns,mand}, of which the $\b$-model of Frisch, Sulem, and Nelkin \cite{fsn} is the most relevant to this present work. To recall it, let us assume that a fluid is in a state of turbulent motion confined to the periodic box $\T^3_L = [0,L]^3$ and observed on a long interval of time $[0,T]$. On each stage of the cascade the kinetic energy is carried through by a set of active eddies $\{\mathfrak{e}_q\}$. Suppose those eddies fill a region $A_q$ of volume $V_q$. If one assumes that $V_q$ is a fraction of the preceding volume, i.e. $V_q \sim \b V_{q-1}$, for some $0<\b \leq 1$, then going up the scales one obtains $V_q  \sim \b^q L^3$. Letting $h = - \log_2 \b$, we have $V_q \sim 2^{-q\h}L^3$ and $\h \geq 0$. A heuristic incidence argument shows that the ultimate accumulation of the cascade occurs on a set that meets infinitely many generations of active eddies, i.e. $A = \limsup_{q\ra \infty} A_q$. Then $D$ is defined as a box counting dimension of $A$, which is shown not to exceed $3 - h$. Based on a dimensional analysis one recovers the following correction to the energy spectrum 
\begin{equation}\label{en-intro}
E(\k) = K \frac{\e^{2/3}}{\k^{5/3}}\left( \frac{\k_0}{\k} \right)^{ 1 - \frac{D}{3}},
\end{equation}
(here $\k_0 = L^{-1}$ is the integral wavenumber) and to the second order structure function
\begin{equation}\label{sosf}
S_2(\ell) = C_2 \e^{2/3} \ell^{2/3} \left( \frac{\ell}{ L }\right)^{1-\frac{D}{3}}.
\end{equation}
The intermittent dimension alone, as defined by the $\beta$-model, is not enough to explain anomalous scalings of higher-order structure functions. In fact, the model gives a linear dependence of $\zeta_p$ on $p$, $\zeta_p = p/3 + (3-D)(1-p/3)$, inconsistent with experimental evidence showing strict concavity of $\zeta_p$. Further multi-fractal formalism of Frisch and Parisi can be implemented to model the full range of anomalous scaling exponents (see \cite{FP,eyink-besov,Frisch}). This shortcoming, however, should not undermine the role of $D$ and the accumulation set $A$ in describing intermittency. In our view, and this will be made more precise below, the pair $(A,D)$ sets the stage where the scaling laws are determined by further finer properties of a particular turbulent flow.

Our primary objective in this paper is to give rigorous definitions to the phenomenological concepts of ``active eddies", ``volumes", ``accumulation set $A$", and its dimension $D$ for a given vector field  $u \in L^2([0,T] \times \T_L^3)$ \emph{without the use of any scaling assumptions} such as those implemented in the $\b$-model. Using those concepts we then derive scaling laws in the form consistent with \eqref{en-intro} and \eqref{sosf}. We use the Littlewood-Paley analysis on the box $\T_L^3$ as a tool to translate various concepts from turbulence into the language of harmonic analysis. Thus, letting $u = \sum_{q \geq -1} u_q $ be the classical Littlewood-Paley decomposition (see Section~\ref{s:fourier}) we interpret  $u_q$ as the collective velocity field of eddies $\mathfrak{e}_q$ of the dyadic size $\ell_q = L/2^q$. Using atomic decomposition $u_q(t) = \sum_{k\in [0,2^q-1]^3 \cap \Z^3 } s_{qk}(t) a_{qk}$ we view terms $s_{qk}a_{qk}$ as turn-over velocities of eddies occupying corresponding dyadic cubes. We use space-time averages $\< \cdot \>$ over the domain $\O_T = [0,T] \times \T^3_L$ in all that follows.

We introduce the following formula for active volumes: $V_q =L^3 \frac{ \<|u_q|^2\>^3}{ \<|u_q|^3\>^{2}}$. It is reminiscent of the statistical skewness factor $S_3(\ell)S_2(\ell)^{-3/2}$ used in experimental measurements. However, our rationale for the formula is rather distant from this analogy (see Section~\ref{sec:active-volumes}). The dimension $D$ is then introduced as an exponential co-type of the sequence $\{V_q\}$:
\begin{equation}\label{h-intro}
D = 3- \liminf_{q\ra \infty} \frac{\log_2(L^{3}/V_q)}{q},
\end{equation}
similar to the $\b$-model, but without any use of scaling. Therefore, $D$ is an intrinsic value of any field. By the H\"{o}lder inequality one obtains natural bounds $V_q \leq L^3$ and $D\leq 3$.  To get a feel of how values of $D$ and $V_q$'s reflect ``rarefaction" of the cascade, let us examine the case of a stationary field $u$. The classical Bernstein inequality says $\| u_q \|_{3} \lesssim 2^{q/2} \| u_q\|_{2}$. Thus, $V_q$ in this case measures the level of saturation of Bernstein's inequality, and we have $V_q \gtrsim \ell_q^3$ implying another natural bound $D \geq 0$ (note that in the time-dependent case, $V_q$ may in principle decay faster due to possible rarefaction in space and time yielding $D<0$, see also a discussion in Frisch \cite{Frisch}). For example, if $u_q$ has only one non-zero harmonic, and hence, equally distributed atomic decomposition, then $V_q \sim L^3$ corresponds to the non-intermittent K41 regime. On the other hand, if $u_q$ contains an array of equally distributed harmonics on the Fourier side, and hence, is highly concentrated on the physical side, then $V_q \sim \ell^3_q$ corresponds to a regime of extreme zero-dimensional intermittency. Such interpretation of intermittency based on Bernstein's inequalities also appeared in derivation of the Desnyanskiy-Novikov dyadic  and continuous  models for the energy cascade in \cite{CFS,CF}.

To define ``active eddies" we are guided by a simple selection principle: if the energy flux passing through a region occupied by an eddy normalized over its size exceeds a multiple of the total energy flux through the same scale, then that eddy qualifies to be active. Based on this principle we derive a threshold eddy turn-over speed to be $\frac{\<|u_q|^3\>}{ \<|u_q|^2\>}$ (see Section~\ref{s:regions}), much in the spirit of our volume formula. The corresponding dyadic cubes over which $|u_q(t)|$ exceeds this value form an ``active region" $A_q(t)$. We define $A = \limsup_{q\ra \infty} A_q$ to be the ``accumulation set", the set of points incident in infinitely many active regions (see Definition~\ref{d:activeregion} and Section~\ref{s:ex} for examples).

Now, let us start with a field  $u \in L^2([0,T] \times \T_L^3)$. Let $D$, $V_q$, $A_q$, and $A$ be defined as above. We prove the following results stated loosely here, and precisely in \thm{t:active}:
\begin{itemize}
\item[-] The Lebesgue measure of $A_q$ does not exceed $V_q$ up to a sub exponential factor;
\item[-] The Hausdorff dimension of almost every time-slice of $A$ does not exceed $D$;
\item[-] For large $q$, most of the $L^3$-average of $u_q$ is concentrated on $A_q$.
\end{itemize}
Next, we obtain analogues of \eqref{en-intro} and \eqref{sosf} under the assumption that $u$ belongs to the Onsager-critical regularity class, i.e. $\oe=\sup_{q} \l_q \<|u_q|^3\> < \infty$, where $\l_q = 2^q/L$ is the dimensional dyadic wavenumber. This quantity is known to provide a sharp bound on the mean energy dissipation rate due to non-linearity (anomalous dissipation), which, in the context of the Euler equation, is related to the conjecture of Onsager from 1949 stating that there exists a weak solution with $\oe<\infty$ that dissipates energy (see \cite{ccfs,cet,dsz,dr,Eyink3,isett,onsager,kacov}). In our analysis, $\oe$ plays the role of a mean dissipation rate $\e$ as in the Kolmogorov law. So, if $\oe<\infty$, and $E$ and $S_2$ are defined by (see also \cite{const-LP,cnt,const-feff} where similar definitions have been adopted)
\begin{equation}\label{}
E_q = \frac{\< |u_q|^2 \>}{\l_q}, \quad S_2(\ell) = \frac{1}{4\pi} \int_{\S^2} \< | \d_{\ell \th} u|^2 \> d\th,
\end{equation}
then 
\begin{itemize}
\item[-] $E_q \lesssim \oe^{2/3}(\l_q)^{-5/3}(L \l_q)^{\frac{D}{3}-1}$, up to a sub-exponential factor, and a similar lower bound holds for the spectrum averaged over nearby frequencies (see \lem{l:ensp});
\item[-] There exists an absolute constant $C_2>0$, and for every $\d>0$ there is adimensional $C_\d>0$ such that  $S_2(\ell) \leq C_2 \oe^{2/3} \ell^{2/3} [( \ell/L)^{1-\frac{D}{3}-\d} +  C_\d (\ell/L)^{4/3} ]$ (see \lem{l:sosf}).
\end{itemize}
Note that the $(\ell/L)^{4/3}$ term in the estimate for $S_2$ is of smaller order. However, since $C_\d$ may be large for small $\d>0$ observing very small scales might be necessary to witness the dominant exponent $2/3 + 1 - D/3$. This phenomenon was also commented on by Kaneda et al \cite{kaneda} where the underestimate of the exponent was attributed to a relatively narrow observed inertial range. 

Going back to the phenomenological picture, it remains to describe how exactly $A$ plays the role of an accumulation set for the energy cascade. Suppose $u$ is a solution to the Euler equation (see Section~\ref{s:eds}). We define the energy flux through the dyadic scales $\ell_q = L/2^q$ by $\Pi_q = \< \pi_q \>$ where the density $\pi_q$ represents a trilinear expression involving $\{u_q\}$'s, see \eqref{piq}. According to \cite{ccfs}, all appreciable interactions participating in the energy transfer localize near the relevant $\ell_q$ scales.  We show in Theorem~\ref{t:main} that the complement of $A$ takes passive part in the cascade process in the sense that there is a nested sequence of sets $G_p \ra A$, in fact $G_p = \cup_{k>p} A_k$, such that for all $p>0$
\[
\lim_{q \ra \infty} \< |\rest{\pi_q}{\O_T \backslash G_p} | \> = 0.
\]
In other words, the cascade tends to accumulate energy on the set $A$, as anticipated.

The active regions $A_q$ can be used to give a more detailed description of a measure-theoretic support of the anomalous energy dissipation introduced by Duchon and Robert in \cite{dr}. Recall that for every weak solution $u \in L^3(\O_T)$ to the Euler equation one can associate a distribution $\D_u$ such that
 \begin{equation}\label{DRE}
    \p_t \left( \frac{1}{2} |u|^2 \right) + \diver \left( u\left(\frac{1}{2} |u|^2 + p\right)\right) + \D_u = 0.
\end{equation}
In our notation, and under the assumption $\e < \infty$ , $\D_u$ is obtained as a weak limit of the sequence $\{\pi_q\}_{q}$ which is uniformly bounded in $L^1(\O_T)$. Thus, $\D_u$ is a measure. One can easily show that the topological support of $\D_u$ is contained in $S$, but it is  unclear whether it is contained in our accumulation set $A$ or is at all relevant to the intermittent cascade. Instead, we show that the measure-theoretic support  of $\D_u$, in the sense of Hahn decomposition, can in fact be described by limits of algebraic progressions of the active regions $A_q$ as defined in Section~\ref{s:dr}. However, due to a possible slow convergence rate of such a limit, $\D_u$ may not enjoy the same dimensional bound as $A$ itself.

Finally, we remark that for Leray-Hopf solutions of the Navier-Stokes equation an analogue of the dimension $D$ was defined in \cite{cs-kolm} based on the similar idea of measuring level of saturation of Bernstein's inequality, only now between $L^\infty$ and $L^2$ norms. It was proved that if $D>3/2$, then the solution is automatically regular. The question of how strong intermittency may be either for solutions of the Euler or Navier-Stokes equations is therefore of fundamental significance (see discussions in \cite{mand,Frisch}). We hope that giving a mathematically rigorous definition of the dimension as attempted in this paper and in \cite{cs-kolm} is the first step in tackling the problem.

\section{Phenomenology of active volumes}

\subsection{Dimensional Fourier analysis on a periodic box}\label{s:fourier}

Let $B(r)$ denote the ball centered at $0$ of radius $r$
in $\R^{3}$. We fix a nonnegative radial function $\chi$ belonging
to~${C_0^{\infty}} (B(0,1))$ such that $\chi(\xi)=1$ for $|\xi|\leq
1/2$. We define $\phi(\xi) = \chi(\xi/2) - \chi(\xi)$ and let
$$
h(x) = \int_{\R^3} \phi(\xi) e^{2\pi i \xi \cdot x } dx,
$$
the usual inverse Fourier transform of $\phi$ on $\R^3$. We now introduce a dimensional dyadic decomposition.
Let $L>0$ be fixed length scale and let $\f_{-1}(\xi) = \chi(L\xi)$ and $\f_q(\xi) = \phi(L\xi/2^q)$ for $q \geq 0$. We have $ \sum_{q\geq -1} \f_q (\xi) = 1$,
and
\begin{equation}\label{nointer}
\supp \f_q \cap \supp\f_p=\emptyset\  \text{ for all } |p-q|\geq 2.
\end{equation}
Let $\T^3_L$ denote the torus of side length $L$.  Let us denote by $\cF$ the Fourier transform on~$\T^3_L$:
$$
\cF(u)(k) = \frac{1}{L^3}\int_{\T^3_L} u(x) e^{-2\pi i k \cdot x}\ dx, \quad k\in \frac{1}{L}\Z^3,
$$
then
$$
\cF^{-1}(f)(x) = \sum_{k\in \frac{1}{L}\Z^3} f(k)e^{2\pi i k \cdot x}
$$
determines the inverse transform. We define 
$$
u_q = \cF^{-1}( \f_q \cF(u)).
$$
Here $\f_q$ must be understood as restricted on the lattice $\frac{1}{L}\Z^3$. In particular, $u_{-1}(x) = \frac{1}{L^3}\int_{\T^3_L} u(y) dy$. It is informative to recall the formula for $h_q = \cF^{-1}\f_q$, $q\geq 0$, which can be derived via the standard transference argument:
\begin{equation}\label{}
h_q(x) = 2^{3q} \sum_{n\in \Z^3} h\left(\frac{2^q x}{L} + 2^q n\right), \quad x\in \T^3_L.
\end{equation}
Then for all $r\in [1,\infty]$ we have
\begin{equation}\label{h-norm}
\| h_q\|_{r}  = \left( \frac{1}{L^3} \int_{\T^3_L} |h_q(x)|^r dx \right)^{1/r} \sim 2^{3q(r-1)/r},
\end{equation}
which is adimensional, independent of the scale $L$. From \eqref{h-norm} follow classical Bernstein's inequalities, which are also scaling invariant:
\begin{equation}\label{}
\| u_q \|_{r''} \leq c 2^{3q\left( \frac{1}{r'} - \frac{1}{r''} \right) } \| u_q\|_{r'}
\end{equation}
for all $1\leq r' < r'' \leq \infty$; and the differential form:
\begin{equation}\label{}
\| \p^\b u_q \|_{r} \sim (2^{q}/L)^{|\b|} \| u_q\|_r, \quad q \geq 0. 
\end{equation}
We will use a special notation for the dimensional wave-numbers, $\l_q =2^q/L$, in order to distinguish them from the adimensional ones $2^q$. The corresponding dyadic length scale is denoted $\ell_q = \l_q^{-1}$.

A further splitting of $u_q$'s into spacial blocks is provided by the atomic decomposition, an essential tool that will be used later in Section~\ref{s:regions} to define active regions.  Let us briefly recall the definition (see \cite{adams}). For $q\geq 0$ and $k \in [0,2^q-1]^3 \cap \Z^3$ we define the dyadic cubes
$$
Q_{qk} = [0, \ell_q)^3 + \ell_q k \ss \T^3_L,
$$
and dilated cubes
$$
Q^*_{qk} = \bigcup_{| k' - k| <3} Q_{qk'}.
$$
For any $M>1$ one can find a spacial decomposition of $u(t,\cdot)$ into $M$-atoms
\begin{equation}\label{}
u(t,x) = \sum_{q=0}^\infty \sum_{k\in [0,2^q-1]^3 \cap \Z^3 } s_{qk}(t) a_{qk}(t,x),
\end{equation}
where $\supp( a_{qk}(t,\cdot) ) \ss Q^*_{qk}$ compactly, and
$$
\max_{|\b|\leq M} \left\{  \l_q^{-|\b|} \| \p^\b_x a_{qk} \|_{L^\infty(\O_T)} \right\} \leq 1.
$$
For every $r < \infty$ one has
\begin{equation}\label{atomLr}
\left( \frac{1}{\l_q^3} \sum_{k\in [0,2^q-1]^3 } |s_{qk}(t)|^r \right)^{1/r} \sim \|u_q(t,\cdot)\|_r.
\end{equation}
Let us recall how the coefficients $s_{qk}$ are constructed. Let $\chi_{qk}$ be the characteristic function of $Q_{qk}$ and let $\eta$ be an approximative kernel supported on $B(1)$. Let $\eta_q (x) = 2^{3q} \eta(\l_q x)$, $\chi'_{qk} = \chi_{qk} \star \eta_{q}$ and $b_{qk} = u_q \chi'_{qk}$. Then let 
\[
\begin{split}
s_{qk}(t) &= \max_{|\b|\leq M} \left\{  \l_q^{-|\b|} \| \p^\b_x b_{qk}(t,\cdot) \|_\infty \right\} \\
a_{qk} & = b_{qk}/s_{qk}.
\end{split}
\]

We use tilde signs $\lesssim$, $\gtrsim$, $\sim$ to denote relations that hold up to an adimensional constant.

\subsection{Active volumes} \label{sec:active-volumes}

We assume that a turbulent fluid fills the periodic box $\T^3_L$. The motion of the fluid is driven by an external stirring force $f$, which is assumed to be time independent and of scale $\eta_f \sim L$. More specifically, $\supp \cF(f) \ss B(c/L)$ for some a-dimensional $c>1$. We observe the fluid on a time interval $[0,T]$. Let us imagine that the turbulent motion of the fluid at scale $\l_q$ consists of actively interacting eddies and these eddies fill a region of volume $V_q$, which we also call active. Our immediate goal is to derive the following explicit formula for $V_q$:
\begin{equation}\label{vol}
    V_q =L^3 \frac{ \<|u_q|^2\>^3}{ \<|u_q|^3\>^{2}}.
\end{equation}
Let us start by noticing that since $\f_q$ has mean zero, for $q\geq 0$, we can prescribe the turn-over velocity of an $\ell_q$-eddy at location $x$ to be $u_q(x)$. Let $U_q$ be a characteristic velocity of an $\ell_q$-eddy. In order to distinguish active eddies from passive ones, one can use an $L^p$-average such as $U_q \sim \<|u_q|^p\>^{1/p}$. The minimal value of $p$ proved to be suitable for studying intermittent cascade turns out to be 3 (see Section \ref{s:eds}), while higher values can be adopted to formulate multi-fractal hypotheses as discussed in Section~\ref{s:mf}. So, we define
\begin{equation}\label{charvel}
    U_q = \frac{L}{V_q^{1/3}}\< |u_q|^3 \>^{1/3},
\end{equation}
or more explicitly,
\begin{equation}\label{charvel2}
    U_q = \left( \frac{1}{TV_q} \int_{\O_T}|u_q(x,t)|^3 dx dt  \right)^{1/3},
\end{equation}
Even though the integration in \eqref{charvel2} is performed over the entire domain, we will prove later that there exists indeed an active region $A_q \ss \O_T$ with $|A_q| \lesssim TV_q$, and
\begin{equation}\label{avevel}
 \int_{A_q} |u_q|^3 dx dt \sim  \int_{\O_T} |u_q|^3 dx dt.
\end{equation}

The input energy produced by the force $f$ is passing from larger to smaller scales. The energy flux per unit volume carried through the scales of order $\ell_q$ is given by
\begin{equation}\label{edr1}
    \e_q = \frac{U_q^3}{\ell_q} = \frac{ L^3}{\ell_q V_q} \< |u_q|^3 \>.
\end{equation}
On the other hand,
\begin{equation}\label{edr2}
    \e_q = \frac{K_q}{t_q}
\end{equation}
where $K_q$ is the average kinetic energy of an active $\ell_q$-eddy
given by
\begin{equation}\label{kinen}
    K_q = \frac{L^3}{V_q} \< |u_q|^2 \>,
\end{equation}
(here again we use the active proportion of the volume) and $t_q$ is the typical turnover time given by
\begin{equation}\label{time}
    t_q = \frac{\ell_q}{U_q} =\frac{ \ell_q  V_q^{1/3}}{L \<|u_q|^3 \>^{1/3}}.
\end{equation}
Putting together \eqref{kinen} and \eqref{time} we obtain another expression for $\e_q$:
\begin{equation}\label{edr3}
\e_q = \frac{ L^4}{\ell_q V_q^{4/3}} \< |u_q|^2 \> \< |u_q|^3\>^{1/3}.
\end{equation}
Equating \eqref{edr1} and \eqref{edr3} we finally obtain \eqref{vol}.

We define local intermittency dimensions $D_q$'s by
\begin{equation}\label{dq}
D_q = 3-  \frac{\log_2 (L^3/V_q)}{q},
\end{equation}
and the ``grand" dimension by the upper exponential type of $D_q$'s:
\begin{equation}\label{D}
D= \limsup_{q\ra \infty} D_q.
\end{equation}
By the H\"{o}lder inequality, all $V_q$'s enjoy a natural bound $V_q \leq L^3$, and hence, $D_q \leq 3$.

\section{Active regions}\label{s:regions}
Let us assume as before that $u(x,t)$ is the velocity field of a turbulent fluid  in the periodic box $\T^3_L$ on a time interval $[0,T]$, and $V_q$'s are defined by \eqref{vol}. In this section we introduce regions that capture most of the active eddies.
Let us now fix any positive adimensional decreasing sequence $\s_q \ra 0$ with zero exponential type
\[
\lim_{q\ra \infty}  \frac{\log \s_q }{q} = 0.
\]
\begin{definition}\label{d:activeregion}
An active region occupied by eddies of size $\ell_p$ at time $t$ is defined by
\begin{equation}\label{Aq}
A_q(t) = \bigcup_{k : |s_{qk}(t)| > \s_q \frac{\<|u_q|^3\>}{\<|u_q|^2\>} } Q_{qk}^*.
\end{equation}
Furthermore, let
\begin{equation}
A_q = \{ (t,x): x\in A_q(t) \}.
\end{equation}
\end{definition}
Roughly, the threshold speed $\s_q \frac{\<|u_q|^3\>}{\<|u_q|^2\>}$ is determined by comparing the energy flux of an eddy with the total flux through scales $\ell_q$. The flux of an eddy averaged over its size is given by $\e_{qk} \sim |s_{qk}|^3 / \ell_q$. Then the selection criterion $\e_{qk} > \s_q^3 \e_q$, where $\e_q$ is defined in \eqref{edr1}, gives the desired formula.

In the following theorem we collect several properties of active regions that are independent of particular nature of the flow or even the evolution law of the field $u$.  For a set $A \ss \O_T$, let $A(t) = \{ x\in \T^3_L: (x,t)\in A\}$.
\begin{theorem}\label{t:active}
Let $A_q$ be the active region defined by \eqref{Aq}, and let  
\begin{equation}\label{}
A = \limsup_{q \ra \infty} A_q = \cap_{p=1}^\infty \cup_{q>p} A_q.
\end{equation}
 Then for some absolute constant $c>0$ we have
\begin{align}
|A_q| & \leq c \s_q^{-3}V_qT, \label{Aqest}\\
(1-c \s_q) \int_{\O_T} |u_q|^3 dx dt & \leq \int_{A_q} |u_q|^3 dxdt \leq \int_{\O_T} |u_q|^3 dxdt, \label{L3}\\
\dim_{\cH}A(t) & \leq D, \text{ for a.e. } t \in [0,T]. \label{Ahaus}
\end{align}
 In particular, if $D<0$ then $A(t) = \emptyset$ for a.e. $t$.
\end{theorem}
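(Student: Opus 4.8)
The plan is to prove the three assertions in order, as they build on one another. For \eqref{Aqest}, the idea is a Chebyshev-type count: the cubes $Q_{qk}$ entering $A_q(t)$ are exactly those for which $|s_{qk}(t)| > \s_q \<|u_q|^3\>/\<|u_q|^2\>$. Using \eqref{atomLr} with $r=3$ we have $\sum_k |s_{qk}(t)|^3 \sim \l_q^3 \|u_q(t,\cdot)\|_3^3$, so the number $N_q(t)$ of such cubes satisfies
\[
N_q(t) \left( \s_q \frac{\<|u_q|^3\>}{\<|u_q|^2\>}\right)^3 \lesssim \l_q^3 \|u_q(t,\cdot)\|_3^3.
\]
Since each $Q_{qk}^*$ has volume $\lesssim \ell_q^3 = \l_q^{-3}$ and the $Q_{qk}^*$ overlap only boundedly, $|A_q(t)| \lesssim N_q(t)\ell_q^3$. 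Integrating in $t$, using $T\<|u_q|^3\> = \int_{\O_T}|u_q|^3$, and substituting the definition \eqref{vol} of $V_q$ turns the right-hand side into $c\,\s_q^{-3} V_q T$, which is \eqref{Aqest}.

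For \eqref{L3}, the right inequality is trivial since $A_q \subset \O_T$ and we can replace $u$ by $u_q$ on $A_q$ modulo controlling the error from other atoms — actually the cleaner route is to estimate the complementary integral $\int_{\O_T \setminus A_q}|u_q|^3$. On the complement of $A_q(t)$ the atomic decomposition is dominated by the small coefficients: writing $u_q = \sum_k s_{qk}a_{qk}$ and noting $\supp a_{qk}\subset Q_{qk}^*$ with boundedly many overlaps, on any point outside $A_q(t)$ only atoms with $|s_{qk}(t)| \le \s_q\<|u_q|^3\>/\<|u_q|^2\>$ contribute, giving
\[
\int_{\O_T\setminus A_q}|u_q|^3\,dx\,dt \lesssim \sum_{k:\,|s_{qk}|\le \s_q(\cdots)} \int |s_{qk}|^3|a_{qk}|^3 \lesssim \left(\s_q\frac{\<|u_q|^3\>}{\<|u_q|^2\>}\right)^{?}\!\!\cdot(\text{lower power of }s_{qk}),
\]
and one pulls out one factor of the threshold while keeping $\sum_k |s_{qk}|^2 \sim \l_q^3\|u_q\|_2^2$ via \eqref{atomLr} with $r=2$; the $\l_q^3$ cancels against the cube volume, and $\<|u_q|^2\>$ cancels the denominator of the threshold, leaving $c\s_q\int_{\O_T}|u_q|^3$. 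Subtracting gives the left inequality of \eqref{L3}. (Here one must be slightly careful to control $|u_q|^3$ in terms of $\sum|s_{qk}a_{qk}|^3$ on each cube using the bounded-overlap property, but this is routine.)

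The dimension bound \eqref{Ahaus} is the main obstacle. The strategy is: for fixed small $\rho>0$ and each $q$, cover $A_q(t)$ by at most $N_q(t)$ dilated cubes of side $\sim \ell_q$, so $\mathcal H^{d+\rho}_{\ell_q}(A_q(t)) \lesssim N_q(t)\ell_q^{d+\rho}$. Then $A(t)\subset \bigcup_{q>p}A_q(t)$ for every $p$, so $\mathcal H^{d+\rho}_{\ell_p}(A(t)) \lesssim \sum_{q>p}N_q(t)\ell_q^{d+\rho}$, and we want this to tend to $0$ as $p\to\infty$. From the estimate above, $\int_0^T N_q(t)\,dt \lesssim \s_q^{-3}V_q/\ell_q^3 = \s_q^{-3}L^3 V_q^{-1}\cdot(V_q^2/L^3\ell_q^3)$... more directly $\int_0^T N_q(t)\ell_q^3\,dt \lesssim \s_q^{-3}V_q T$, hence $\int_0^T N_q(t)\ell_q^{d+\rho}\,dt \lesssim \s_q^{-3}V_q T\,\ell_q^{d-3+\rho} = \s_q^{-3}T\,L^3(V_q/L^3)\,\ell_q^{-\h+\rho}$. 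By the definition \eqref{h} of $\h$, for any $\rho'>0$ and $q$ large, $V_q/L^3 \le 2^{-q(\h-\rho')} = (\ell_q/L)^{\h-\rho'}$, so the $q$-th term is $\lesssim \s_q^{-3}T L^{3}(\ell_q/L)^{\h-\rho'}\ell_q^{-\h+\rho} = \s_q^{-3}T L^{3-\h+\rho'}\ell_q^{\rho-\rho'}$. Choosing $\rho' < \rho$ and recalling $\s_q^{-3}$ has zero exponential type while $\ell_q^{\rho-\rho'}=2^{-q(\rho-\rho')}L^{-(\rho-\rho')}$ decays geometrically, the series $\sum_{q>p}\int_0^T N_q(t)\ell_q^{d+\rho}\,dt$ is summable and its tail $\to 0$ as $p\to\infty$. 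By Fatou/monotone convergence (the integrand is nonnegative), $\int_0^T \liminf_p \mathcal H^{d+\rho}_{\ell_p}(A(t))\,dt = 0$ — or more carefully, $\int_0^T \sum_{q>p} N_q(t)\ell_q^{d+\rho}\,dt \to 0$ forces a subsequence along which $\sum_{q>p_j}N_q(t)\ell_q^{d+\rho}\to 0$ for a.e.\ $t$, hence $\mathcal H^{d+\rho}(A(t))=0$ for a.e.\ $t$, giving $\dim_{\mathcal H}A(t)\le d+\rho$; letting $\rho\to 0$ through a countable sequence finishes \eqref{Ahaus}. The final sentence is immediate: if $d<0$ then taking $\rho<|d|$ gives $\mathcal H^0(A(t))=0$ for a.e.\ $t$, i.e.\ $A(t)=\emptyset$. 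The delicate points are the a.e.-in-$t$ extraction (handled by the integral bound plus a subsequence/Borel–Cantelli argument) and making sure the bounded-overlap constants in the covering are genuinely absolute.
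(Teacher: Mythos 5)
Your proposal is correct and follows essentially the same route as the paper's proof: a Chebyshev count on the atomic coefficients via \eqref{atomLr} for \eqref{Aqest}, the pointwise/coefficient-level threshold bound on the complement of $A_q$ to get \eqref{L3}, and the covering-plus-summation argument combined with the definition \eqref{h} of $\h$ and the zero exponential type of $\s_q$ to get \eqref{Ahaus} after an a.e.-in-$t$ extraction and a countable limit in the exponent. The only cosmetic difference is the case $d<0$: the paper argues directly that $\sum_{p>q}N_p(t)$ vanishes for a.e.\ $t$, while you run the same series with a negative exponent and invoke monotonicity of Hausdorff measures, which amounts to the same estimate.
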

We thus see that the active regions are regions where most of the characteristic velocity $U_q$ is concentrated, yet the measure of $A_q$ on average does not exceed $V_q$ up to a subexponential factor. Inequality \eqref{Ahaus} confirms the dimensional prediction of the $\b$-model, and we will argue in the next section that $A$ indeed represents the set where the energy flux accumulates.
\begin{proof}
Let $N_q(t)$ be the number of cubes in the union \eqref{Aq}. Then trivially
$$
|A_q(t)| \leq \frac{27}{\l_q^3}N_q(t).
$$
On the other hand, 
$$
  \frac{1}{\l_q^3}N_q(t) = \left|\bigcup_{k : |s_{qk}(t)| > \s_q \frac{\<|u_q|^3\>}{\<|u_q|^2\>} } Q_{qk}\right| \leq |A_q(t)|.
$$
Integrating in time we obtain
\begin{equation}\label{measAq}
 \frac{1}{\l_q^3}\int_0^T N_q(t) dt \leq |A_q| \leq  \frac{27}{\l_q^3}\int_0^T N_q(t)dt.  
\end{equation}
In view of \eqref{atomLr} we have
\[
\begin{split}
\int_{\O_T} |u_q|^3 dx dt \sim \frac{1}{\l_q^3} \int_0^T \sum_k |s_{qk}(t)|^3 dt \geq 
 \frac{ \s_q^3 }{\l_q^3}\frac{\<|u_q|^3\>^3}{\<|u_q|^2\>^3} \int_0^T N_q(t)\, dt.
\end{split}
\]
Thus, in view of \eqref{measAq},
\[
|A_q| \leq c \s_q^{-3} \frac{\<|u_q|^2\>^3}{\<|u_q|^3\>^3}\int_{\O_T} |u_q|^3 dx dt = c\s_q^{-3} V_q T,
\]
as claimed.

 Now let us observe that on the complement $A_q^c = \O_T \backslash A$, we have $|u_q| \leq c \s_q \frac{\<|u_q|^3\>}{\<|u_q|^2\>}$. We therefore obtain
\[
\begin{split}
\int_{A^c} |u_q|^3 dxdt = \int_{A^c} |u_q|^2 |u_q| dxdt
& \leq \s_q \frac{\int_{\O_T} |u_q|^3\,  dxdt}
{\int_{\O_T} |u_q|^2  dxdt} \int_{A^c} |u_q|^2\,  dxdt\\
& \leq \s_q \int_{\O_T} |u_q|^3\,  dxdt,
\end{split}
\]
which proves \eqref{L3}.

Let us first recall that the Hausdorff dimension of a set $A \ss \T_L^3$, $\dim_{\cH}(A)$ is the smallest $d$ for which the Hausdorff measure vanishes
\[
\cH_{d+\d}(A)=\lim_{\e \ra 0} \cH_{d+\d,\e}(A) = 0,
\]
for all $\d>0$, where
\[
 \cH_{d,\e}(A) = \inf\left\{ \sum_{i=1}^\infty (\diam{A_i})^d: A \ss \cup_i A_i, \diam{A_i}<\e \right\}.
\]

Directly from \eqref{Aqest} we have
\begin{equation} \label{intN-1}
\frac{1}{T}\int_0^T N_q(t) \, dt \lesssim \s_q^{-3} \l_q^3 V_q.
\end{equation}
Given $\delta>0$, we note that
\begin{equation} \label{temp1}
\frac{V_q}{L^3} \leq 2^{-q(D-3-\d/2)},
\end{equation}
for all $q$ large enough. Let us now assume that $D 
\geq 0$. 
Then
$$
\cH_{D+\d, \ell_q}(A(t)) \leq   \cH_{D+\d, \ell_q}(\cup_{p>q} A_p(t)) \leq \sum_{p>q} N_p(t) \frac{1}{\l_p^{D+\d}}.
$$
Integrating in time, using \eqref{intN-1}, \eqref{temp1} and the fact that
$\s_q \to 0$ algebraically, we obtain
$$
\int_0^T \cH_{D+\d, \ell_q}(A(t)) dt \leq T\sum_{p>q} \frac{V_p}{\s_p^3 \l_p^{-D+3+\d} } \leq T\sum_{p>q} \frac{L^{D} 2^{\d/2}}{\s_p^3 \l_p^{\d} } \to 0,
$$
as $q\ra \infty$. So, in the limit we obtain
$$
\int_0^T \cH_{D+\d}(A(t)) dt  = 0.
$$
Hence, $\dim_{\cH} A(t) \leq D+\d$ for a.e. $t\in [0,T]$, which concludes
the proof.

If $D<0$, then \eqref{intN-1} and \eqref{temp1} imply
\[
\int_{0}^T \sum_{p>q} N_p(t) dt \lesssim \s_q^{-3}2^{-q\d/2},
\]
for $q$ large enough. Thus the measure
\[
\left|\left\{ t: \sum_{p>q} N_p(t) \geq 1 \right\}\right| \leq  \s_q^{-3}2^{-q\d/2},
\]
and on the complement the set $A_q(t)$ is empty. Passing to the limit we conclude that $A(t)$ is empty a.e.
\end{proof}

\section{Accumulation of the energy flux}\label{s:eds}
Let us consider the Euler equations in the periodic box $\T^3_L$:
\begin{align} \label{ee}
\frac{\partial u}{\partial t} + (u \cdot \nabla)u &= - \nabla p +f, \\
\nabla \cdot u &=0. \label{diver-free}
\end{align}

A vector field $u\in C_w([0,T]; L^2(\T^3_L))$, (the space of weakly continuous functions), is a weak solution of the Euler equations
with initial data $u_0\in L^2(\T^3_L)$
if for every $\psi \in C^\infty_0([0,T]\times \T^3_L)$
with $\n_x \cdot \psi =0$ and $0\leq t \leq T$, we have
\begin{equation}\label{weaksol}
    \int_{\T^3_L \times \{t\}} u \cdot \psi - \int_{\T^3_L \times \{0\}} u_0 \cdot \psi - \int_{\O_t} u\cdot
    \p_s \psi = \int_{\O_t} (u\otimes u) : \n \psi + \int_{\O_t} f \cdot \psi,
\end{equation}
and $\n_x \cdot u(t) = 0$ in the sense of distributions. We define the operation $:$ by
$$
A:B = \tr[AB].
$$

\subsection{Energy flux and density}
In order to properly define the flux of kinetic energy across the scales, let us fix a $q\geq 1$ and test \eqref{ee} against the filtered field
$$
u_{<q} = \sum_{p=-1}^{q-1} u_p = \cF^{-1}(\chi(2^q\cdot/L) \cF(u)).
$$ 
Denoting $\tilde{u}_{<q} =  \cF^{-1}(\sqrt{\chi(2^q\cdot/L) } \cF(u))$ which also represents a dyadic filtration, we obtain the following budget relation for the energy dissipation rate across the wavenumber $\l_q$:
\begin{equation}\label{budget}
\frac{1}{2}\frac{d}{dt} \|\tilde{u}_{<q} \|_2^2 = \int_{\T_L^3}( u\otimes u): \n u_{<q} dx +\int_{\T_L^3} f\cdot u dx,
\end{equation}
(notice that the latter integral is independent of $q$ for $q$ larger than the integral scale). Let us denote 
$$
\Pi_q(t) =   \int_{\T_L^3}( u\otimes u): \n u_{<q} dx
$$ 
the total energy flux due to nonlinearity. It represents the averaged contribution of all nonlocal interactions to the energy exchange across the wavenumber $\l_p$. By antisymmetry of the trilinear term, we have
$$
 \int_{\T_L^3}( u\otimes u_{< q}): \n u_{<q} dx = 0,
 $$
which is a statement of the fact that eddies larger than $\ell_q$ on average do not carry the energy across the scale $\ell_q$. Thus,
\begin{equation}\label{flux2}
\Pi_q (t) =  \int_{\T_L^3}( u\otimes u_{\geq q}): \n u_{<q} dx.
\end{equation}
In this form the flux is clear from the large scale "shuffling". We notice that the Fourier support considerations reduce the formula \eqref{flux2} further to
\begin{equation}\label{flux3}
\Pi_q (t) =  \int_{\T_L^3}\sum_{\substack{p'\geq q-1,\ p''\geq q,\ p'''<q \\ |p'-p''|<2}}( u_{p'}\otimes u_{p''}): \n u_{p'''} dx.
\end{equation}
This formula shows detailed contribution of scales to the energy budget relation. We will see in \lem{l:ql} that the remote scales do not in fact have a considerable impact on the flux due localization (see \cite{ccfs} and Lemma~\ref{l:ql} below). 
We proceed now to define the energy flux density as the integrand of \eqref{flux3}:
\begin{equation}\label{piq}
\pi_q =\sum_{\substack{p'\geq q-1,\ p''\geq q,\ p'''<q \\ |p'-p''|<2}}( u_{p'}\otimes u_{p''}): \n u_{p'''}.
\end{equation}
Thus,
\begin{equation}\label{}
\Pi_q(t) =  \int_{\T_L^3} \pi_q(x,t) dx.
\end{equation}
In addition we define for each $K < q$ the truncated density
\begin{equation}\label{}
\pi_q^K = \sum_{\substack{p', p'', p''' \text{ as above}\\ p',p'',p'''\in [q-K,q+K]}}( u_{p'}\otimes u_{p''}): \n u_{p'''}.
\end{equation}

Let us remark that we used the fact that $u$ is a solution to the Euler equation only to motivate our definition of the energy flux and its density \eqref{piq}. From this point on we will not use any particular evolution law of the field $u$, so the results below hold for any field as long as the flux is given by a dimensionally similar expression. Consequently, the results are applicable to a variety of other equations where the nonlinearity is similar to that of the Euler equation, for instance, the surface quasi-geostrophic equation. 

Following the ideas set forward in \cite{ccfs} we obtain a localization property of the flux in the next lemma. 
\begin{lemma}\label{l:ql} Suppose that 
\begin{equation}\label{e:ons}
\e = \limsup_{q\ra \infty} \l_q \< |u_q|^3 \> <\infty.
\end{equation}
Then we have
\begin{equation}\label{fluxlimit}
\limsup_{q\ra \infty}\< |\pi_q| \> \leq c\e,
\end{equation}
for some adimensional constant $c>0$. Moreover, 
\begin{equation}\label{localization}
\lim_{K\ra \infty} \limsup_{q\ra \infty}\< |\pi_q - \pi_q^K| \> =0.
\end{equation}
\end{lemma}
\begin{proof}
Using the H\"{o}lder and differential Bernstein's inequalities, we have
\begin{align*}
\<| \pi_q| \> & \leq \sum_{\substack{p'\geq q-1,\ p''\geq q,\ p'''<q \\ |p'-p''|<2}} \< |u_{p'}|^3\>^{1/3} \< |u_{p''}|^3\>^{1/3}  \l_{p'''}\<| u_{p'''}|^3\>^{1/3} \\
& \leq \left( \sum_{p> q-2} \< |u_{p}|^3\>
\right)^{2/3} \left( \sum_{p<q} \l_p^3 \< |u_{p}|^3\> \right)^{1/3}\\
& = \left( \sum_{p> q-2} \l_q\l_p^{-1} \l_p\< |u_{p}|^3\>
\right)^{2/3} \left( \sum_{p<q} \l_p^2\l_q^{-2} \l_p \< |u_{p}|^3\> \right)^{1/3}.
\end{align*}
Clearly, the sums above are of convolution type with the exponentially decaying kernels $\l_q\l_p^{-1}$ and $\l_p^2\l_q^{-2}$. Hence, \eqref{fluxlimit} follows. If one of $p',p'',p'''$ is outside $[q-K,q+K]$, then in at least one of the sums above the summation is performed over $p$'s with $|p-q| >K$. Thus, 
$$
\limsup_{q\ra \infty} \< |\pi_q - \pi_q^K| \> \leq c \e \l_K^{-2/3},
$$
and \eqref{localization} follows.
\end{proof}
Let us make some comments on the assumption \eqref{e:ons} as it will also appear later in the section. Distributions satisfying \eqref{e:ons} enjoy smoothness $1/3$ measured in the space-time averaged sense, akin to the Besov class $B^{1/3}_{3,\infty}$ for time-independent fields. The condition $\e = 0$, as follows from \lem{l:ql} and also as demonstrated in \cite{ccfs}, is the sharpest condition that guarantees vanishing of the energy flux. This subsequently implies energy conservation for the weak solution of the Euler equation. Whether condition $\e=0$ is the best possible for the energy conservation law to hold constitutes what is called the Onsager conjecture of 1949, \cite{onsager}. It amounts to showing that there exists a weak solution satisfying \eqref{e:ons} and not conserving energy. In recent years the conjecture enjoyed rapid development partly from the influx of ideas from differential topology (see \cite{kacov,isett,dsz} for recent accounts). As of today the best result belongs to P. Isett who constructed dissipative solutions in $C_{t,x}^{1/5 - }$, \cite{isett}. The condition \eqref{e:ons} certainly holds for smooth solutions on their interval of regularity. It is not a priori guaranteed however that \eqref{e:ons} will hold for weak solutions or smooth solutions up to the time of possible blow-up (neither is it known for solutions to the Navier-Stokes equation). We therefore treat condition \eqref{e:ons} as an additional assumption, one that is consistent with properties of a turbulent flow ($\e$ has the same units as Kolmogorov's energy dissipation rate per unit mass) and that is sufficient for controlling energy flux on the purely analytical level.

\subsection{Energy flux and active regions}

In this subsection we will connect the energy flux with active regions. 
\begin{definition} We say that the energy flux accumulates on a measurable set $G \ss \O_T$ if there exists a decreasing sequence of sets $G_q$ such that $G = \cap_{q=1}^\infty G_q$ and for all $q\in \N$ one has
\begin{equation}\label{}
\lim_{p \ra \infty} \< |\rest{\pi_p}{\O_T \backslash G_q} | \> = 0.
\end{equation}
\end{definition}

We will now prove that the energy flux accumulates on the region $A$ defined in \thm{t:active}.

\begin{theorem}\label{t:main} Suppose $u\in C_w([0,T]; L^2(\T^3_L))$ is a weak solution to the Euler equations, and
\begin{equation}\label{}
\e = \limsup_{q\ra \infty} \l_q \< |u_q|^3 \> <\infty.
\end{equation}
Then the energy flux accumulates on $A=\limsup_{q \to \infty} A_q$,
where $A_q$ are the active regions.
\end{theorem}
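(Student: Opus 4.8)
The plan is to take the obvious candidate for the sequence of open sets, namely $G_q$ built from the active regions at scales $>q$, and show the flux density $\pi_p$ becomes negligible off $G_q$ as $p\to\infty$, for each fixed $q$. Concretely, set $G_q = \bigcup_{k>q} A_k$ (or a slightly enlarged open version of it, since the $A_k(t)$ are finite unions of dilated cubes and hence already closed/open up to boundary), so that $G=\bigcap_q G_q = \limsup_k A_k = A$. The sequence $\{G_q\}$ is decreasing by construction. Fix $q\in\N$; we must show $\lim_{p\to\infty}\<|\rest{\pi_p}{\O_T\setminus G_q}|\>=0$. Since $p>q$ eventually, we have $A_p\ss G_q$, so $\O_T\setminus G_q \ss \O_T\setminus A_p = A_p^c$. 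Thus it suffices to prove $\lim_{p\to\infty}\<|\rest{\pi_p}{A_p^c}|\>=0$; the set $G_q$ then plays essentially no role beyond furnishing a fixed decreasing nested family, and the real content is a statement about $\pi_p$ restricted to the complement of the active region at the \emph{same} scale $p$.

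The main estimate is therefore: off $A_p$, the field $u_p$ is pointwise small, of size $\lesssim \s_p \<|u_p|^3\>/\<|u_p|^2\>$, by the very definition \eqref{Aq} of $A_p$ together with the atomic decomposition (the argument already used for \eqref{L3} in the proof of \thm{t:active}: on $A_p^c$ every atom coefficient $s_{pk}$ is below threshold, and $\|a_{pk}\|_\infty\le 1$, so $|u_p|\le c\s_p\<|u_p|^3\>/\<|u_p|^2\>$ there). However $\pi_p$ involves not just $u_p$ but a whole band of frequencies $p',p'',p'''$ near $p$, and only $u_{p'''}$ is a truly low mode; the high modes $u_{p'},u_{p''}$ have $p',p''\ge p-2$ but are not literally $u_p$. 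Here is where \lem{l:ql} is essential: by the localization \eqref{localization}, $\pi_p$ is well approximated in $L^1(\O_T)$-average by the truncated density $\pi_p^K$ for $K$ large, uniformly in $p$, and $\pi_p^K$ only sees scales in $[p-K,p+K]$. So the plan is (i) replace $\pi_p$ by $\pi_p^K$ at the cost of $\le c\e\l_K^{-2/3}$; (ii) on $A_p^c$ estimate $\<|\rest{\pi_p^K}{A_p^c}|\>$ by Hölder and differential Bernstein exactly as in the proof of \lem{l:ql}, but now extracting from each factor $\<|u_{p'}|^3\>^{1/3}$ with $|p'-p|\le K$ a gain: since the cubes $Q_{pk}^*$ have bounded overlap and $A_p$ is a union of $\ell_p$-cubes, a mode $u_{p'}$ with $|p'-p|\le K$ restricted to $A_p^c$ is controlled — one needs that $\|\rest{u_{p'}}{A_p^c}\|_3$ is small, not just $\|\rest{u_p}{A_p^c}\|_3$.

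The hard part is exactly step (ii): transferring the pointwise smallness of $u_p$ on $A_p^c$ to smallness of the neighbouring modes $u_{p'}$, $|p'-p|\le K$, on $A_p^c$ (or on a comparable set). The natural route is to observe that $A_p$ is essentially the region where $u_p$ is large, and that $u_{p'}$ for $|p'-p|\le K$ is, up to the finite overlap inherent in the Littlewood-Paley bands and a bounded number of $K$-dependent constants, ``seen'' by the same collection of cubes at scale $p$; one then shows that off $A_p$ the $L^3(\O_T)$-mass of $u_{p'}$ is $o(\<|u_{p'}|^3\>)$ as $p\to\infty$, using that $\s_p\to 0$ and that $\<|u_{p'}|^3\>$ and $\<|u_p|^3\>$ are comparable up to a $K$-dependent constant under the hypothesis $\e<\infty$ (indeed $\l_p\<|u_p|^3\>$ is bounded, and for the lower end one may need to argue along a subsequence or absorb degenerate scales). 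Then for fixed $K$ one gets $\limsup_{p\to\infty}\<|\rest{\pi_p^K}{A_p^c}|\>=0$, and combining with (i),
\[
\limsup_{p\to\infty}\<|\rest{\pi_p}{A_p^c}|\> \le c\e\l_K^{-2/3},
\]
which tends to $0$ as $K\to\infty$. Since the left side is independent of $K$, it is zero, and as $\O_T\setminus G_q\ss A_p^c$ for $p>q$ this yields $\lim_{p\to\infty}\<|\rest{\pi_p}{\O_T\setminus G_q}|\>=0$ for every $q$, which is precisely the assertion that the energy flux concentrates on $A$.
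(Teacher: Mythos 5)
Your choice of $G_q=\bigcup_{k>q}A_k$ and the overall scheme (localize via \lem{l:ql}, then exploit the threshold bound off the active regions) is the same as the paper's, but your reduction discards the one structural feature of $G_q$ that makes the argument close, and the patch you propose for the resulting ``hard part'' does not hold. The paper never reduces to the single set $A_p^c$: since $G_q\supset A_{p'}$ for \emph{every} $p'\geq q$, once $p>q+K$ the complement $\O_T\setminus G_q$ lies simultaneously in $A_{p'}^c\cap A_{p''}^c\cap A_{p'''}^c$ for every triple appearing in $\pi_p^K$, so each frequency is restricted to the complement of its \emph{own} active region, where the bounds $|u_{p'}|\lesssim \s_{p'}\<|u_{p'}|^3\>/\<|u_{p'}|^2\>$ and (via the atomic decomposition) $|\n u_{p'''}|\lesssim \l_{p'''}\s_{p'''}\<|u_{p'''}|^3\>/\<|u_{p'''}|^2\>$ hold by definition; no transfer of smallness between neighbouring scales is needed. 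By keeping only $\O_T\setminus G_q\ss A_p^c$ you create a step that cannot be carried out: membership in $A_p^c$ gives no information about $u_{p'}$ for $p'\neq p$, since active regions at different scales need not be spatially correlated. Moreover your proposed rescue --- that $\<|u_{p'}|^3\>$ and $\<|u_p|^3\>$ are comparable up to a $K$-dependent constant under $\e<\infty$ --- is false: the hypothesis gives only the upper bound $\l_p\<|u_p|^3\>\leq C$, and neighbouring shells can be empty or arbitrarily small (lacunary spectra), so no comparability is available.

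A second missing ingredient is the mechanism that turns the product of three factors into a quantity of size $o(1)$ times something controlled by $\e$. The threshold bounds carry the denominators $\<|u_{p}|^2\>$, and estimating all three factors in $L^3$ off the active regions does not make these cancel. The paper's device is to apply the pointwise bound only to the factor $p^*$ with maximal $\<|u_{p^*}|^2\>$ and to estimate the remaining two by Cauchy--Schwarz and Bernstein, $\<|u_{p''}|\,|\n u_{p'''}|\>\lesssim \l_{p'''}\<|u_{p''}|^2\>^{1/2}\<|u_{p'''}|^2\>^{1/2}\leq \l_{p'''}\<|u_{p^*}|^2\>$, so the denominator cancels and one is left with $C(K)\,\s_{p^*}\,\l_{p^*}\<|u_{p^*}|^3\>\to 0$; the case $p^*=p'''$ additionally requires the gradient bound off $A_{p'''}$ coming from the atoms. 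Your sketch addresses neither this cancellation nor the gradient factor, so step (ii) as written is a genuine gap rather than a routine adaptation of \lem{l:ql}.
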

\begin{proof}
Let $G_q = \cup_{p=q}^\infty A_p$ and let  $G_q^c = \O_T \backslash G_q$. According to \lem{l:ql} it suffices to show the limit
\begin{equation}\label{}
\lim_{p\ra \infty} \< | \rest{\pi^K_{p}}{G^c_q} | \> = 0,
\end{equation}
for all $K, q\in \N$. For all $p>q+K$ we obtain the following estimate
\[
\begin{split}
 \< | \rest{\pi^K_{p}}{G^c_q} | \>  &\leq \sum_{p',p'',p'''\in [p-K, p+K]} \< |\rest{u_{p'}}{G^c_q}|\, |\rest{u_{p''}}{G^c_q}|\, |\n \rest{u_{p'''}}{G^c_q}| \>\\
 &\leq \sum_{p',p'',p'''\in [p-K, p+K]} \< |\rest{u_{p'}}{A^c_{p'}}|\, |\rest{u_{p''}}{A^c_{p''}}|\, |\n \rest{u_{p'''}}{A^c_{p'''}}| \>.
\end{split}
\]
Let $p^*\in \{p', p'', p'''\}$ be such that
\[
\<|u_{p^*}|^2\>=\max\{\<|u_{p'}|^2\>,\<|u_{p''}|^2\>,\<|u_{p'''}|^2\> \}.
\]
We will consider three different cases: $p^*=p'$, $p^*=p''$, and $p^*=p'''$.
The first two are similar, so we consider only the first and the last ones.
Assume  $p^* = p'$. We have 
\[
|\rest{u_{p'}(t,x)}{A^c_{p'}}|  \leq c \s_{p'}\frac{\<|u_{p'}|^3\>}{\<|u_{p'}|^2\>}.
\] 
Then we obtain
\[
\begin{split}
\< |\rest{u_{p'}}{A^c_{p'}}|\, |\rest{u_{p''}}{A^c_{p''}}|\, |\n \rest{u_{p'''}}{A^c_{p'''}}| \>&\leq \s_{p'} \frac{\<|u_{p'}|^3\>}{\<|u_{p'}|^2\>} \< |u_{p''}|\, |\n u_{p'''}| \> \\
& \lesssim \s_{p'} \frac{\<|u_{p'}|^3\>}{\<|u_{p'}|^2\>}
\l_{p'''} \<|u_{p''}|^2\>^{1/2} \< |u_{p'''}|^2 \>^{1/2}\\
& \leq \s_{p'}  \l_{p'''} \<|u_{p'}|^3\>\\
& \leq \s_{p'}  \l_{2K}\l_{p'} \<|u_{p'}|^3\>\\
& \to 0,
\end{split}
\]
as $p \to \infty$.

In the case where $p^*=p'''$ we notice that for every $(t,x) \in A^c_{p'''}$ ,
\[
|\n u_{p'''}(t,x)| \leq \left| \sum_{k} s_{p'''k}(t) \n a_{p'''k}(t,x) \right| \lesssim
\l_{p'''} \s_{p'''} \frac{\<|u_{p'''}|^3\>}{\<|u_{p'''}|^2\>} .
\]
So the argument above works as well. We thus obtain
\[
 \< | \rest{\pi^K_{p}}{G^c_q} | \>   \leq c(K)\e \s_{p-K} \ra 0,
 \]
as $p\ra \infty$.

\end{proof}

In relation with the Onsager conjecture mentioned above we note that \thm{t:main} provides no guarantee that the energy dissipation will occur if $A\neq \emptyset$. In the next section and Section~\ref{s:ex} we will see that for many examples of vector fields and solutions to the Euler equations one can describe active regions and accumulation sets explicitly.

\def \evec {\vec{e}_2}

\subsection{A further confinement of the energy flux}\label{s:red}
Let us consider the following two dimensional example. Let $\vec{e}_j$ be the vectors of the standard unit basis. We fix a large $s>0$ and define
\[
\begin{split}
u&=  \sum_{q = 1}^\infty u_q, \\
u_q& = \evec \frac{1}{\l_q^s} \sin(\l_q x_1).
\end{split}
\]
Thus, $u$ is a smooth stationary parallel shear flow. By a direct computation for this example we have $V_q \sim 1$, so $D = 2$, $s_{qk} \sim \l_q^{-s}$ and hence, $A_q = \O_T$, $A = \O_T$ . Yet, it is clear that for $s$ large enough $\lim_{q \ra \infty} \< |\pi_q| \> = 0$ on the entire space-time. Thus, the energy flux accumulates on a much smaller set than $A$ -- the empty set! The reason is that the conclusion of \thm{t:main} is based solely on  the intermittency character of the flow, i.e. saturation of the $L^3$-average relative to the $L^2$-average. This particular example lacks it completely -- it is uniformly ``active" throughout the flow domain. So, in order to make the prediction of the theorem more precise we need to dispose of the regions where the local regularity of $u$ is better than Onsager's threshold, namely $\e(u) = \limsup_{q \ra \infty} \l_q \< |u_q|^3 \> = 0$. In order to state the local version of $\e(u)$ we use the terminology introduced in \cite{shv-org}.

\begin{definition} Let $u: [0,T] \ra  L^2(\T^3_L)$ be a time dependent field. We say that $u$ is (Onsager-)regular if $\e(u) = 0$. Now let $U \ss \O_T$ be a relatively open set. We say that $u$ is regular on $U$ if $(u\f) $ is regular for every scalar function $\f \in C_0^\infty(U)$. We denote by $R$ the union of all open $U$'s on which $u$ is regular, and call  $S = \O_T \backslash R$ the singular set of $u$. Clearly, $S$ is closed.
\end{definition}

As shown in \cite{shv-org} for every weak solution to the Euler equation $u\in C_w([0,T]; L^2(\T^3_L))$ the local energy equality holds on its regular set $R$:
\begin{equation}\label{}
\int_{R(t'')} |u|^2 \f - \int_{R(t')} |u|^2 \f  - \int_R |u|^2 \p_t \f = \int_R(|u|^2 + 2p) u\cdot \n \f,
\end{equation}
for all $\f \in C_0^\infty(R)$ and $t',t'' \in [0,T]$. In terms of anomalous dissipation distribution introduced by Duchon and Robert \cite{dr} it means that $\supp \D(u) \ss S$. 
\begin{theorem}\label{t:finer} Under the assumptions of \thm{t:main} the flux concentrates on $A \cap S$.
\end{theorem}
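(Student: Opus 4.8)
By \thm{t:main} the flux already concentrates on $A=\limsup_p A_p$ through the exhausting sequence $H_q=\bigcup_{p\ge q}A_p$; to bring in the singular set I will simply intersect these with shrinking neighborhoods of $S$. Put
\[
N_q=\{z\in\O_T:\dist(z,S)<1/q\},\qquad G_q=H_q\cap N_q .
\]
Since $S$ is closed and $H_q\downarrow$, $N_q\downarrow$, we have $\bigcap_q G_q=(\bigcap_q H_q)\cap(\bigcap_q N_q)=(\limsup_p A_p)\cap S=A\cap S$ (as in the proof of \thm{t:main}, the routine matter of enlarging the cubes defining $A_p$ so that the $G_q$ become open is suppressed). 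Fix $q$. Then
\[
\<\,|\rest{\pi_p}{\O_T\setminus G_q}|\,\>\ \le\ \<\,|\rest{\pi_p}{\O_T\setminus H_q}|\,\>\ +\ \<\,|\rest{\pi_p}{\O_T\setminus N_q}|\,\>,
\]
and the first term tends to $0$ as $p\to\infty$: this is exactly what the proof of \thm{t:main} establishes (reduction to the density $\pi_p^K$ via the localization \eqref{localization} of \lem{l:ql}, followed by the pointwise bound for $u$ on $A_p^c$). So everything is reduced to the second term.

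\textbf{The regularity reduction.} The set $Q:=\O_T\setminus N_q=\{z:\dist(z,S)\ge1/q\}$ is compact and contained in the open regular set $D=\O_T\setminus S$. Choose $\f\in C_0^\infty(D)$ with $0\le\f\le1$ and $\f\equiv1$ on an open neighborhood of $Q$, so that $1-\f$ vanishes on a $\rho$-neighborhood of $Q$ for some $\rho>0$. By the definition of the singular set $u$ is regular on $D$, hence $\e(u\f)=\limsup_p\l_p\<|(u\f)_p|^3\>=0$. The proof of \lem{l:ql} uses only H\"older's and Bernstein's inequalities applied to the Littlewood--Paley pieces of the field, never the Euler equations, so it applies verbatim to the field $v=u\f$ (writing $\pi_p(v)$, $\pi_p^K(v)$ for the flux densities formed from $v$ in place of $u$, with $\pi_p(u)=\pi_p$) and yields
\[
\lim_p\<\,|\pi_p(u\f)|\,\>=0,\qquad \lim_{K\to\infty}\ \limsup_p\<\,|\pi_p(u\f)-\pi_p^K(u\f)|\,\>=0 .
\]

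\textbf{The commutator estimate (the crux).} It remains to compare $\rest{\pi_p^K(u)}{Q}$ with $\pi_p^K(u\f)$. Write $\pi_p^K(u)$ as a finite sum over triples $p',p'',p'''\in[p-K,p+K]$ of the trilinear terms $(u_{p'}\otimes u_{p''}):\n u_{p'''}$; telescoping the substitution $u\mapsto u\f$ in the three slots expresses $\pi_p^K(u)-\pi_p^K(u\f)$ as a finite sum of trilinear terms in which exactly one factor is replaced by $u_{p'}-(u\f)_{p'}=(u(1-\f))_{p'}$ (or its gradient), the other two being Littlewood--Paley pieces of $u$ or of $u\f$. Those two are bounded in $\<\,|\cdot|^2\,\>$ uniformly, since $u\in C_w([0,T];L^2(\T^3_L))$ forces $\sup_t\|u(t)\|_2<\infty$ by Banach--Steinhaus. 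On the other hand, since $1-\f$ vanishes on the $\rho$-neighborhood of $Q$, the Schwartz decay of the convolution kernels $h_{p'}$ gives, for every $N$,
\[
\bigl\|\,\rest{(u(1-\f))_{p'}(t,\cdot)}{Q(t)}\,\bigr\|_{\infty}\ \lesssim_N\ \rho^{-N}\,\l_{p'}^{\,3-N}\,\sup_t\|u(t)\|_2 ,
\]
with one extra factor $\l_{p'''}$ for the differentiated slot. Combining this with H\"older's and Bernstein's inequalities for the other two factors and using $p',p'',p'''\in[p-K,p+K]$ gives
\[
\<\,|\rest{(\pi_p^K(u)-\pi_p^K(u\f))}{Q}|\,\>\ \lesssim\ C(K,\rho,N)\,\l_p^{\,4-N}\ \to\ 0\quad\text{as }p\to\infty,
\]
once $N\ge5$. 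This locality argument is the main obstacle: one must discard the contribution of $u(1-\f)$ on $Q$ using only the near-locality of $\Delta_{p'}$ while keeping the two ``large'' factors under control, which is precisely why passing first to the finitely supported density $\pi_p^K$ (via \lem{l:ql}) is essential.

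\textbf{Assembly.} For fixed $q$ and $K$, using $\<\,|\rest{g}{Q}|\,\>\le\<|g|\>$,
\[
\limsup_p\<\,|\rest{\pi_p}{\O_T\setminus N_q}|\,\>\ \le\ \limsup_p\<|\pi_p(u)-\pi_p^K(u)|\>\ +\ 0\ +\ \lim_p\<|\pi_p(u\f)|\>\ +\ \limsup_p\<|\pi_p(u\f)-\pi_p^K(u\f)|\>,
\]
the middle zero being the commutator estimate and the third term being zero by the previous step; letting $K\to\infty$ kills the first and fourth terms by \lem{l:ql} applied to $u$ and to $u\f$ respectively. Hence $\lim_p\<\,|\rest{\pi_p}{\O_T\setminus N_q}|\,\>=0$, and combined with the active-region part this yields $\lim_p\<\,|\rest{\pi_p}{\O_T\setminus G_q}|\,\>=0$ for every $q$, i.e.\ the flux concentrates on $A\cap S$.
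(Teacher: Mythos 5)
Your proof is correct, and while its skeleton matches the paper's (split $\O_T\setminus(H_q\cap N_q)$ into the $A$-part, disposed of by \thm{t:main}, and the part away from $S$, handled by a cutoff $\f\in C_0^\infty(D)$ with $\f\equiv 1$ near $Q=\O_T\setminus N_q$ and the Onsager regularity $\e(u\f)=0$), the mechanism you use for the key comparison step is genuinely different. The paper restricts the identity $(u\f)_{p'}=u_{p'}\f+u\f_{p'}+r_{p'}(u,\f)$ to the set where $\f=1$, so that $u_{p'}$ there equals $(u\f)_{p'}$ minus two error terms, and then estimates everything in $L^3$: the main term vanishes because $\l_{p'}\<|(u\f)_{p'}|^3\>\to 0$, the term $u\f_{p'}$ by smoothness of $\f$, and the Constantin--E--Titi-type commutator $r_{p'}(u,\f)$ via Minkowski, $\<|r_{p'}|^3\>\lesssim\l_{p'}^{-1}$, which yields an explicit rate $\l_{p'}^{-1/3}$. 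You instead telescope $u\mapsto u\f$ through all three slots of the truncated density $\pi_p^K$ and control the difference on $Q$ by near-locality of the Littlewood--Paley kernels: since $u(1-\f)$ vanishes on a $\rho$-neighborhood of $Q$, the Schwartz tails of $h_{p'}$ give an arbitrarily fast polynomial decay $\lesssim_N\rho^{-N}\l_p^{4-N}$ using only the crude bound $\sup_t\|u(t)\|_2<\infty$ (correctly justified via Banach--Steinhaus) and $L^2$--Bernstein bounds on the remaining two factors; the main term is then killed by applying \lem{l:ql} wholesale to the field $u\f$ (legitimate, since that lemma's proof uses only H\"older and Bernstein, never the equations), and the two $K$-truncation errors disappear as $K\to\infty$ by the localization estimate for $u$ and for $u\f$. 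What each approach buys: the paper's stays entirely within the $L^3$/Onsager-critical framework and gives quantitative decay rates, while yours is more elementary in the sense that the off-support contribution is discarded by kernel decay and $L^2$ information alone, at the cost of non-sharp (but amply sufficient) estimates; both share the same implicit conveniences (openness of the exhausting sets up to enlarging cubes, and regularity of $u\f$ for $\f\in C_0^\infty(D)$, which strictly needs the standard partition-of-unity remark) that the paper also suppresses.
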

\begin{proof}
Let $S_q = \{ y\in \O_T: \dist\{y,S\} < \frac{1}{q} \}$. Then $S = \cap_q S_q$, $A\cap S = \cap_q(S_q \cap G_q)$, and so, it suffices to show that 
\begin{equation}\label{}
\lim_{p\ra \infty} \< | \rest{\pi^K_{p}}{G^c_q\cup S_q^c} | \> = 0,
\end{equation}
for all $K, q\in \N$. Following the proof of \thm{t:main} we argue
\[
\begin{split}
 \< | \rest{\pi^K_{p}}{G^c_q\cup S_q^c} | \>  &\leq \sum_{p',p'',p'''\in [p-K, p+K]} \< |\rest{u_{p'}}{G^c_q}|\, |\rest{u_{p''}}{G^c_q}|\, |\n \rest{u_{p'''}}{G^c_q}| \>\\
 &+ \sum_{p',p'',p'''\in [p-K, p+K]} \< |\rest{u_{p'}}{S^c_q}|\, |\rest{u_{p''}}{S^c_q}|\, |\n \rest{u_{p'''}}{S^c_q}| \>.
\end{split}
\]
It thus suffices to prove convergence of the second sum to zero. Since $S_q^c$ is closed and disjoint from $S$ we can find a cut-off function $\f \in C_0^\infty(R)$ such that $\f = 1$ on $S_q^c$. We use the following representation
\begin{equation}\label{r}
(u\f)_{p} = u_p \f + u \f_p + r_p(u,\f),
\end{equation}
where
$$
r_p(u,\f)(x,t) = \int_{\T^3_L} h_p(y)(u(x-y,t) - u(x,t))(\f(x-y,t)-\f(x,t)) dy.
$$
Restricting \eqref{r} onto $S_q^c$ we obtain
\[
\rest{u_{p'}}{S_q^c} = \rest{(u\f)_{p'}}{S_q^c} - \rest{u\f_{p'}}{S_q^c} - \rest{r_{p'}(u,\f)}{S_q^c}.
\]
We then trivially estimate
\[
\begin{split}
 \< |\rest{u_{p'}}{S^c_q}|\, |\rest{u_{p''}}{S^c_q}|\, |\n \rest{u_{p'''}}{S^c_q}| \> &\leq  \< |(u\f)_{p'}|\, |u_{p''}|\, |\n u_{p'''}| \> 
\\& + \< |u\f_{p'}|\, |u_{p''}|\, |\n u_{p'''}| \>\\
&+\< |r_{p'}(u,\f)|\, |u_{p''}|\, |\n u_{p'''}| \>.
 \end{split}
 \]
For the first term we have
\[
\begin{split}
\< |(u\f)_{p'}|\, |u_{p''}|\, |\n u_{p'''}| \> \leq C(K) (\l_{p'}\<|(u\f)_{p'}|^3\>)^{1/3} (\l_{p''}\<|u_{p''}|^3\>)^{1/3}\times \\ \times (\l_{p'''}\<|u_{p'''}|^3\>)^{1/3},
\end{split}
\]
which vanishes as $p\ra \infty$ by the regularity of $u\f$. The second term vanishes trivially by the regularity of $\f$. As to the third, we have by Minkowskii's inequality
\[
\begin{split}
\< |r_{p'}(u,\f)|^3 \> & \leq  \int_{\T^3_L} |h_p(y)| \< |u(\cdot-y,t) - u(\cdot,t)|^3|\f(\cdot-y,t)-\f(\cdot,t)|^3 \> dy \\
&\leq c \<|u|^3\> \|\n \f\|_\infty  \int_{\T^3_L} |h_p(y)||y|dy \leq C(u,\f) \l_{p'}^{-1}.
\end{split}
\]
Using this estimate we obtain
\[
\begin{split}
\< |r_{p'}(u,\f)|\, |u_{p''}|\, |\n u_{p'''}| \> & \leq C(u,\f)  \l_{p'}^{-1/3} ( \<|u_{p''}|^3\>)^{1/3}( \l_{p'''} \<|u_{p'''}|^3\>)^{1/3} \\ & \leq C(u,\f,K,\e) \l_{p'}^{-1/3} \ra 0.
\end{split}
\]
This concludes the proof of the theorem.

\end{proof}

\subsection{Application to the scaling laws of turbulence}\label{ss:scaling}
In this section we derive some bounds that replicate the classical power laws of turbulence with intermittency correction
in terms of $D_q$'s defined in \eqref{dq}. 

Let us denote 
$
\oe=\sup \l_q\<|u_q|^3\>,
$
and $\ue = \inf\< |\pi_q|\>$. Then
\[
\ue \leq \<|\pi_q|\> \lesssim \oe,
\]
for all $q$. Here,  the upper bound follows from the local estimates on the flux (see \cite{ccfs} or Subsection~\ref{s:red}):
\begin{equation}\label{e:est-flux}
\< |\pi_q| \> \lesssim \sum_p K_{q-p} \l_p\|u_p\|_3^3,
\end{equation}
where $K_{q} = \l_{|q|}^{-2/3}$. The right hand side of \eqref{e:est-flux} is a convolution with the kernel $K = \{K_q\}_{q\in \Z}$. Since the tails of the kernel decay exponentially fast, the convolution may be viewed as a type of averaging where terms near the wavenumber $\l_q$ are highlighted while terms with remote wavenumbers are suppressed. Thus, bound \eqref{e:est-flux} is a reflection of the physical principle of the locality of the energy flux.

We define energy spectrum in the Littlewood-Paley settings as in \cite{const-LP}:
\begin{equation}\label{}
E_q = \frac{\< |u_q|^2 \>}{\l_q}.
\end{equation}
In view of our definition \eqref{dq},
\begin{equation}\label{vol-q}
V_q = L^{D_q}\l_q^{D_q-3},
\end{equation} 
and hence,
$$
\< |u_q|^2  \> = \< |u_q|^3\>^{2/3} V_q^{1/3}L^{-1} =  \< |u_q|^3\>^{2/3} (L\l_q)^{\frac{D_q}{3}-1} \leq
\frac{L^{\frac{D_q}{3}-1}\oe^{2/3}}{\l_q^{\frac{5-D_q}{3}}}.
$$
Deviding by $\l_q$ we arrive at the following upper bound on the spectum:
\begin{equation}\label{upper-en}
E_q \leq \frac{\oe^{2/3}}{\l_q^{5/3}(L\l_q)^{1-\frac{D_q}{3}}}.
\end{equation}
Note that bound \eqref{upper-en} recovers \eqref{en-intro} up to a sub-exponential factor. Let us now give a lower bound on the spectrum. Using \eqref{e:est-flux} we obtain
\[
\begin{split}
\ue^{2/3} &\leq \< |\pi_q|\>^{2/3} \lesssim \left(\sum_p K_{q-p} \l_p \<|u_p|^3\>\right)^{2/3}  \lesssim \sum_p K_{q-p}^{2/3} \l_p^{2/3} \<|u_p|^3\>^{2/3}\\
& = \sum_p K_{q-p}^{2/3} \l_p^{2/3} \<|u_p|^2\>V_p^{1/3} L =\sum_p K_{q-p}^{2/3} \l_p^{5/3}(L\l_p)^{1-\frac{D_p}{3}}E_p.
\end{split}
\]
Combining with \eqref{upper-en} we have proved the following lemma.
 
\begin{lemma}\label{l:ensp} Let $u$ be an incompressible vector field with finite $\oe$. One has the following two-sided bound on the averaged energy spectrum:
\begin{equation}\label{ul-bound}
\ue^{2/3} \lesssim K^{2/3}\ast \{\l_q^{5/3}(L\l_q)^{1-\frac{D_p}{3}}E_q\}_q \lesssim \oe^{2/3}.
\end{equation}
\end{lemma}
We are not aware of any previous attempt to obtain a lower bound on the energy spectrum in the literature. It seems that the averaged form of such a bound, which appears necessary to our statement of \eqref{ul-bound}, is not unnatural to an experimentalist either, given the difficulties associated with collecting statistical data from a turbulent flow.

Next, let us consider the velocity displacement $\d_y u(x,t) = u(x+y,t) - u(x,t)$ and for $0< \ell \leq L$  define the generalized isotropic second order structure function by
$$
S_2(\ell) = \frac{1}{4\pi} \int_{\S^2} \< | \d_{\ell \th} u|^2 \> d\th.
$$
\begin{lemma}\label{l:sosf} Let $u$ be an incompressible vector field with finite $\oe$. There exists an absolute constant $C_2>0$, and for every $\d>0$ there is an adimensional $C_\d>0$  such that  
\begin{equation}\label{sof}
S_2(\ell) \leq  C_2 \oe^{\frac{2}{3}} \ell^{\frac{2}{3}} \left[ \left( \frac{\ell}{L} \right)^{1-\frac{D}{3} -\d} + C_\d \left( \frac{\ell}{L} \right)^{\frac{4}{3}} \right],
\end{equation}
holds for all $\ell<L$.
\end{lemma}
\begin{proof}
Let us fix $y\neq 0$ and using \eqref{vol-q} observe for all $q >0$
\begin{equation}\label{hren1}
\begin{split}
\< | \d_y u|^2 \> & \lesssim \sum_{p\leq q} |y|^2 \l_p^2 \< |u_p|^2 \> + \sum_{p>q} \< |u_p|^2\> \\
& \lesssim \oe^{2/3} \sum_{p\leq q} |y|^2 \l_p^2V_p^{1/3}  \l_p^{-2/3} + \oe^{2/3} \sum_{p>q} V_p^{1/3}  \l_p^{-2/3}\\
&\leq  \oe^{2/3} \sum_{p\leq q} |y|^2 \l_p^{4/3} (L\l_p)^{\frac{D_p}{3} - 1}  + \oe^{2/3} \sum_{p>q} (L\l_p)^{\frac{D_p}{3} - 1}  \l_p^{-2/3}
\end{split}
\end{equation}
Let $\d>0$ be fixed. Then, in view of $\limsup_{q \ra \infty} D_q = D$, there exists a $p_0\in \N$ such that for all $p>p_0$ one has $D_p < D + 3\d$. Let us suppose that $|y| < \frac{1}{\l_{p_0}}$. Then we choose $q = [ \log_2(L |y|^{-1}) ] +1 > p_0$. Notice that $\l_q \sim 1/|y|$. Then splitting the first sum in \eqref{hren1} we continue
\begin{equation}\label{hren2}
\begin{split}
\< | \d_y u|^2 \>& \lesssim  C_\d \oe^{2/3} |y|^2 L^{-4/3} + \oe^{2/3}|y|^2 \sum_{p_0 < p \leq q}  \l_p^{4/3} (L \l_p)^{\frac{D}{3} - 1 +\d} \\
&+ \oe^{2/3} \sum_{p\geq  q}(L \l_p)^{\frac{D}{3} - 1 +\d} \l_p^{-2/3}\\
& \lesssim  C_\d \oe^{2/3} |y|^2 L^{-4/3} +  \oe^{2/3}|y|^2 \l_q^{4/3} (L \l_q)^{\frac{D}{3} - 1 +\d} + \oe^{2/3} (L \l_q)^{\frac{D}{3} - 1 +\d} \l_q^{-2/3}\\
& \lesssim  C_\d \oe^{2/3} |y|^2 L^{-4/3} +  \oe^{2/3}(L \l_q)^{\frac{D}{3} - 1 +\d} \l_q^{-2/3}( |y|^2 \l_q^{2}  + 1),
\end{split}
\end{equation}
where $C_\d>0$ is a constant dependent only on $\d>0$. Now recalling that $\l_q \sim 1/|y|$, we obtain
\[
\< | \d_y u|^2 \> \lesssim \oe^{2/3} |y|^{2/3} \left[ C_\d \left( \frac{|y|}{L} \right)^{4/3} + \left( \frac{|y|}{L} \right)^{1-\frac{D}{3} -\d} \right],
\]
which is \eqref{sof}.

Now let us suppose that $|y| \geq \frac{1}{\l_{p_0}}$. Then we choose $q = p_0$ and continue from \eqref{hren1}
\begin{equation}\label{hren3}
\begin{split}
\< | \d_y u|^2 \>& \lesssim C_\d \oe^{2/3} |y|^2 L^{-4/3} + \oe^{2/3} (L\l_{p_0})^{\frac{D}{3} - 1+\d}  \l_{p_0}^{-2/3}\\
 \end{split}
\end{equation}
Notice that the power of $\l_{p_0}$ is negative, so using our assumption we replace it with $|y|^{-1}$:
\[
\begin{split}
\< | \d_y u|^2 \> & \lesssim C_\d \oe^{2/3} |y|^2 L^{-4/3} + \oe^{2/3}  \left( \frac{|y|}{L} \right)^{1-\frac{D}{3} -\d} |y|^{2/3}\\
&  = \oe^{2/3}  |y|^{2/3} \left[ C_\d \left( \frac{|y|}{L} \right)^{4/3} + \left( \frac{|y|}{L} \right)^{1-\frac{D}{3} -\d} \right],
\end{split}
\]
which immediately implies \eqref{sof}.
\end{proof}
By a similar argument we find $\< | \d_y u|^3\> \lesssim \oe \ell$. Thus, by interpolation, for any $2\leq p\leq 3$ we have up to a higher order correction term
\begin{equation}\label{secondorder}
S_p(\ell) \leq C_p \oe^{\frac{p}{3}} \ell^{\zeta_p},\quad \zeta_p = \frac{p}{3}+(3-D)\left(1 - \frac{p}{3} \right). 
\end{equation}
Formula \eqref{secondorder} is exactly the one that appears from the dimensional argument of the $\b$-model. As commented in the introduction, however, it is not consistent with experimental results for larger values of $p$. 
A finer tuning can be achieved by using the multi-fractal formalism of Frisch and Parisi \cite{FP} on which we comment below.

\subsection{Remarks on multi-fractality: future research}\label{s:mf}
As we have seen, no uni-scaling hypothesis used in the original $\b$-model \cite{fsn}  is necessary to justify some intermittency corrections for deterministic fields. In fact, the accumulation set $A$ can be used as a base to build a multi-fractal theory similar to the one proposed in the celebrated work of Frisch and Parisi \cite{FP}. In our context this can be achieved by defining a nested sequence of active volumes, regions, dimensions, etc, based on saturation of the $L^2$-$L^p$ averages. Indeed, if the argument of Section~\ref{sec:active-volumes} is started by using $L^p$-averages of the velocity field as characteristic speeds, one obtains
\begin{equation}\label{}
V_q^{(p)} = L^3 \frac{ \< |u_q|^2 \>^{\frac{p}{p-2}}}{ \< |u_q|^p \>^{\frac{2}{p-2}}}.
\end{equation}
The threshold speed for an active eddie becomes
\begin{equation}\label{}
|s^{(p)}_q| \sim \frac{ \< |u_q|^p \>^{\frac{1}{p-2}}}{ \< |u_q|^2 \>^{\frac{1}{p-2}}}.
\end{equation}
One can define active regions $A_q^{(p)}$ and accumulant $A^{(p)}$ in a similar way. By interpolation, one has $A_q^{(p'')} \ss A_q^{(p')}$ for all $p''>p'\geq 3$, and hence $\{A^{(p)}\}_{3\leq p\leq \infty}$ defines a foliation of our base set $A$. Multi-fractality can therefore be viewed as a set of scaling parameters: the dimensions $D^{(p)}$ as co-exponents of the volume sequences $V_q^{(p)}$, and a spectrum of scaling exponents of the field on accumulation sets $A^{(p)}$.

We will leave formal analysis to a future work.

\section{Examples}\label{s:ex}

In this section we examine several examples of fields to illustrate how active regions can be computed explicitly. We also show that generally there in no inclusion of the sets $A$ and $S$ in either side.

 Let us consider first the stationary vortex sheet solution, where $u(x,y,z) = (H(z), 0,0)$, where $H$ is the Heaviside function. Then by a direct computation, $\< |u_q|^p \> \sim \ell_q$, for any $p\geq 1$ and $q \geq 1$. Thus, $\e \in (0,\infty)$, $V_q \sim \ell_q$, and therefore $D = 2$, which is exactly the dimension of the set of discontinuities for $u$. Notice also that $S=\{z=0\}$. Furthermore, $A = \{z=0\}$ as well. One can see it by manufacturing atoms for $u$ out of smoothed characteristic functions of the dyadic cubes. Then $s_{qk}$ become comparable to $1$ only about the $1/\l_q$ vicinity of $S$, and zero away from it. This makes $A_q$ be a sequence of $1/\l_q$-thin slabs converging to $S$. Thus, in the limsup $A_q \ra S$. Note that $u$ is stationary, $f=0$, and therefore the energy conservation holds, despite of the fact that neither $A$ or $S$ sees it. Even more generally, one can show that for any smooth vortex sheet $A$ and $S$ coincide with the sheet, yet the energy conservation holds. This is due to the particular kinematic condition on the sheet (the particles cannot cross the surface of the sheet) that follows from the weak formulation of the Euler equations (see \cite{shv-org}). 

Computation of the active regions in the case of a one-point singularity can be done explicitly too. We recall for a moment a two-dimensional example studied in \cite{kacov} where $u$ is assumed to be $-1/3$-homogeneous in the radial direction near the origin, it is a stationary solution to \eqref{ee} with smooth forcing $f$, and thus $u = \n^\perp \psi$ where $\psi(r,\th) = r^{2/3} \Psi(\th)$ near the origin, $0$ far from it, and $\Psi \in C^2([0,2\pi])$. Here $(r,\th)$ stand for the polar coordinates. Clearly $S \subseteq \{0\}$ and it is easy to verify directly by integration that $\< |u_q|^p\> \sim \ell_q^{(2-p/3)}$, for $p\leq 3$. Thus, $S = \{0\}$ and the solution overall is Onsager-critical. Furthermore, $V_q \sim \ell_q^2$,  and $D = 0$ as it is supposed to be. To find $A$ we note that the condition $|s_{qk}| > \s_q \l_q^{1/3}$ can only be satisfied near the origin since away from it, $s_{qk} \ra 0$ exponentially fast in $q$. Thus, $A_q \ra \{0\} = A$. The smoothness of $f$ necessitates the singular part of the pressure to be given by $p = r^{-2/3} P$, $P$ is a constant and $\Psi$ satisfies the following ODE
\begin{equation}\label{ode}
3(\Psi')^2 + 4\Psi^2 + 6 \Psi \Psi'' = P.
\end{equation}
The flux is given by 
\begin{equation}\label{psiflux}
\Pi = \< f \cdot u\> =  \int_0^{2\pi} (\Psi'(\th))^3 d\th.
\end{equation}
As shown in \cite{kacov}, \eqref{ode} implies $\Pi = 0$, and hence all such stationary solutions have no anomalous dissipation, again in spite of the fact that $A=S\neq \emptyset$. It is not clear at the moment whether an Onsager-critical weak solution to the Euler equation with smooth forcing and one-point singularity may have a non-vanishing energy flux.

The analogous three dimensional construction can be considered as well, where in spherical coordinates $u = \frac{1}{r^{2/3}} U(\th, \phi)$ near the origin, and $U$ is smooth vector field. In this case we have $\< |u_q|^p\> \sim \ell_q^{(3-2p/3)}$, for $p\leq 3$, and hence $V_q \sim \ell_q^3$ confirming again the dimension $D = 0$ is consistent with the structure of the singularity. At this moment an examination of the flux as in 2D has not been performed although we have reasons to believe that the result remains the same. We note that the analogue of the ODE \eqref{ode} becomes now a system of coupled nonlinear second order PDEs on components of $U$. 

So far we have seen examples with the inclusion $S \subseteq A$. Generally, this is not the case. Let us consider cubes of integers $C_q$ of side width $ \l_q/10$ placed at the frequency $\l_q \vec{e}_1$, and define 
\[
\begin{split}
u & = \sum_q u_q, \\
\cF(u_q)(k) & = \vec{e}_2 \frac{1}{\l_q^{7/3}} \chi_{C_q}(k).
\end{split}
\]
Then $\|u_q\|_3 \sim 1/\l_q^{1/3}$, $\|u_q\|_2 \sim 1/\l_q^{5/6}$, and hence $V_q \sim 1/\l_q^3$, and $D = 0$. The threshold for active eddies becomes $|s_{qk}| > \s_q \l_q^{2/3}$, yet one can see that $u_q$ is the product of $3$ modulated and scaled Dirichlet kernels $\l_q^{-7/3} \mathrm{Dir}^{\otimes 3}_{\l_q/10}$. Thus $|u_q| \sim \l_q^{2/3}$ at the origin and $u_q$ concentrates near the origin as $q\ra 0$. So, by choosing a sequence $\s_q$ decaying slowly enough we can conclude that the eddies over the threshold concentrate near the origin as well. This shows that $A = \{0\}$. Now let $U\ss \T_L^3$ be open and $\f \in C_0^\infty(U)$ and $\f \geq 0$. Let $\a = \int \f >0$. We have $(u\f)_q \sim u_q \f $ by the same analysis as in the proof of \thm{t:finer}. Then due to the smoothness of $\f$ and uniformity of Fourier modes of $u_q$ in the box $C_q$ we obtain 
\begin{equation}\label{alf}
\cF(u_q \f)(k) = \cF(u_q) \star \cF(\f) \sim \a/\l_q^{7/3}
\end{equation}
for $q$ large enough and for all $k$ in a sub-box of $C_q$ with edges of size $\l_q/10 - N$. The larger the $N$ the more precise the identity \eqref{alf} becomes in the limit $q \ra \infty$. Thus, up to a negligible error, 
\[
\|u_q \f\|_3 \gtrsim \| \l_q^{-7/3} \mathrm{Dir}^{\otimes 3}_{\l_q/10-N}\|_3 \gtrsim \a/\l_q^{1/3}.
\]
This shows that $u\f$ is Onsager-singular, and thus $S = \T_L^3$.

Finally we note that an example is available with smoothness $1/3$ in the $L^{18/11}$-average sense ($u \in B^{1/3}_{18/11,\infty}$ in the notation of Besov spaces) that solves the Euler equation with smooth forcing $f = ( 0,0,\cos(x) )$ and has anomalous dissipation $\< f \cdot u \> \neq 0$. The details of this construction will be presented elsewhere.

\section{Connection with Duchon-Robert's approach}\label{s:dr}
Let $u$ be a weak solution to the Euler equation with $\e <\infty$, and let $\D_u$ be the distribution of Duchon and Robert as described in the Introduction. From Lemma~\ref{l:ql} we can see that $\{\pi_q\}_q$ is uniformly bounded in $L^1(\O_T)$. By following the same calculations as in \cite{dr} one can show that in our terms $\D_u$ is the weak$^*$-limit of this sequence, hence $\D_u$ is a measure of bounded variation. In this section we will use the active regions to describe a measure-theoretic support of $\D_u$ in the sense of Hahn decomposition.

We first make several definitions. Let $B \ss \O_T$ be a measurable set, and $k \geq 0$ be an integer. We say that $B$ is a $k$-cluster of the sequence $\{A_q\}_q$ if for every open set $U$ containing $B$ one has
\begin{equation}\label{cluster}
\lim_{Q \ra \infty} \max\left\{ |q_1 - q_2|: q_i > Q,(\cup_{q_1 \leq q \leq q_2} A_q) \cap U = \emptyset \right\} = k.
\end{equation}
In other words for very $U$ the gap in the string of indices $q$ such that $A_q \cap U = \emptyset$ eventually does not exceed $k$, and $k$ is the minimal such gap.
If $k = \infty$, we define a $\infty$-cluster as a set $B$ which is not a $k$-cluster for any finite $k\geq 0$. 

Let $B$ be a closed $k$-cluster. Denote $k = k(B)$. Clearly, if $B_1 \ss B_2$, then $k(B_1) \geq k(B_2)$. We call $B$ minimal if for every proper closed subset $B'$ of $B$, one has $k(B') > k(B)$. By Zorn's lemma every closed $k$-cluster set $B$, with finite $k$, contains a minimal $k$-cluster subset. Indeed, let $\{B_\a\}_{\a \in I} $ be a chain of $k$-clusters ordered by inclusion.  Let $B = \cap_{\a \in I} B_\a$. Then for every $U$ containing $B$ , $B_\a \ss U$ eventually, i.e. for all $\a > \a_0$. Since $B_\a$ is a $k$-cluster, \eqref{cluster} holds. This shows that $B$ is a lower bound for the chain, and Zorn's lemma applies.

Consider the set 
\[
M = \bigcup_{0 \leq k < \infty} \bigcup_{B \text{ is minimal $k$-cluster}} B.
\]
\begin{theorem} Suppose $u$ is a weak solutions to the Euler equation with $\e < \infty$. Then
\begin{itemize}
\item[(i)] $\O_T \backslash M$ is a null-set of $\D_u$;
\item[(ii)] $\supp \D_u \ss \overline{M} \ss \cap_{q \geq 1} \overline{ \cup_{p>q} A_p}$ .
\end{itemize}
\end{theorem}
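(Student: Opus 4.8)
The plan is to exploit the Hahn decomposition of the measure $\D(u)$ together with the localization of the flux established in \lem{l:ql}. Write $\O_T = P \cup N$ for the Hahn decomposition of $\D(u)$, and let $\D(u) = \D^+ - \D^-$ be the corresponding Jordan decomposition. Since $\D(u)$ is the weak$^*$-limit of $\{\pi_q\}_q$ in the space of bounded measures, for part (i) it suffices to show that for every point $y \notin M$ there is an open neighborhood $U \ni y$ on which $|\D(u)|(U) = 0$; equivalently, on which both $\limsup_q \int_U \pi_q^+ \, dx dt = 0$ and $\limsup_q \int_U \pi_q^- \, dx dt = 0$. The key geometric input is that if $y \notin M$, then $y$ is not contained in any minimal finite $k$-cluster, and hence (using the Zorn's lemma argument already recorded in the excerpt, since every closed finite $k$-cluster contains a minimal one) $y$ has a neighborhood $U$ that is either an $\infty$-cluster or else fails to be a $k$-cluster for every finite $k$ — in either case, by the definition \eqref{cluster}, there is an infinite sequence of index-windows $[q_1^{(j)}, q_2^{(j)}]$ with $q_2^{(j)} - q_1^{(j)} \to \infty$ and $(\cup_{q \in [q_1^{(j)},q_2^{(j)}]} A_q) \cap U = \emptyset$.

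With such a neighborhood $U$ in hand, I would argue exactly as in the proofs of \thm{t:main} and \thm{t:finer}: by \lem{l:ql} it is enough to control the truncated density $\pi_p^K$ for each fixed $K$, and for $p$ in the middle of a long gap-window (so that $[p-K,p+K]$ lies entirely inside $[q_1^{(j)}, q_2^{(j)}]$, which is possible once $q_2^{(j)}-q_1^{(j)} > 2K$), every frequency shell $u_{p'}$, $u_{p''}$, $u_{p'''}$ appearing in $\pi_p^K$ is restricted to $A_{p'}^c$, $A_{p''}^c$, $A_{p'''}^c$ on $U$. The same three-case analysis (largest $L^2$-average at $p'$, $p''$, or $p'''$) then yields $\< |\rest{\pi_p^K}{U}| \> \lesssim c(K)\, \oe\, \s_{p-K} \to 0$ along the subsequence of such $p$. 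Since this holds for every $K$ and $\lim_K \limsup_p \<|\pi_p - \pi_p^K|\> = 0$, we get $\liminf_{p} \int_U |\pi_p| \, dx dt = 0$, which forces $|\D(u)|(U) = 0$. Covering $\O_T \backslash M$ by countably many such $U$ (using second countability of $\O_T$) gives (i).

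For part (ii), the inclusion $\supp \D(u) \ss \overline{M}$ is immediate from (i): the topological support is the complement of the largest open null set, and $\O_T \backslash \overline{M} \ss \O_T \backslash M$ is open and null. The remaining inclusion $\overline{M} \ss \cap_{q\geq 1} \overline{\cup_{p>q} A_p}$ is a purely set-theoretic fact about clusters: if $B$ is any finite $k$-cluster then, taking $U = \O_T$ in \eqref{cluster}, the set $\{p : A_p \cap U \neq \emptyset\}$ has bounded gaps beyond some index, so in particular $A_p \neq \emptyset$ for infinitely many $p > q$, whence every point of $B$ lies in $\overline{\cup_{p>q}A_p}$ for each $q$ — actually one needs slightly more, namely that points of $B$ are approached by points of $A_p$; this follows because if some point $b \in B$ had a neighborhood $U_b$ disjoint from $\cup_{p > q_0} A_p$, then $U_b$ would witness an infinite gap, contradicting that $B$ is a finite $k$-cluster. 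Taking the union over all finite $k$ and all minimal clusters, then closing up, gives $\overline{M} \ss \cap_q \overline{\cup_{p>q}A_p}$.

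The main obstacle I anticipate is the passage from the vanishing of the flux densities $\pi_p$ on $U$ (along a subsequence) to the statement $|\D(u)|(U) = 0$: one must be careful that the weak$^*$-limit of a sequence whose total variation on $U$ tends to zero \emph{along a subsequence} indeed has zero variation on $U$. This is true — $|\D(u)|(U) \leq \liminf_p \int_U |\pi_p|$ by lower semicontinuity of the total variation under weak$^*$ convergence, applied to the restriction to $U$ — but it requires phrasing the flux estimate as an $L^1(U)$-bound (which the $\<\cdot\>$-notation already delivers, since $\<\cdot\>$ is the space-time integral) rather than merely a bound on $|\Pi_p|$. A secondary technical point is the routine-but-necessary verification that restricting $u_{p'''}$'s \emph{gradient} to $A_{p'''}^c$ and bounding it by $\l_{p'''}\s_{p'''}\<|u_{p'''}|^3\>/\<|u_{p'''}|^2\>$, exactly as in the $p^*=p'''$ case of \thm{t:main}, is legitimate on the neighborhood $U$ — this is where the choice of $U$ as a gap-window (so that $U \cap A_{p'''} = \emptyset$ for all relevant $p'''$) is used.
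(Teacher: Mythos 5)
Your reduction of (i) is where the proof breaks. You propose to show that \emph{every} point $y\notin M$ has a fixed open neighborhood $U$ with $|\D(u)|(U)=0$, and then cover. But the geometric step you invoke — that such a $U$ is an $\infty$-cluster, hence admits gap-windows of unbounded length — is false in general: enlarging a set can only decrease its cluster index (as noted in the paper, $B_1\ss B_2$ implies $k(B_1)\geq k(B_2)$), so while $\{y\}$ is indeed an $\infty$-cluster when $y\notin M$, a whole neighborhood of $y$ need not be. In particular, for $y\in\overline{M}\setminus M$ every neighborhood contains minimal clusters, the $A_q$ keep returning to it with bounded gaps, and no window argument applies; your scheme can therefore only reach points $y\notin\overline{M}$, which yields $\supp\D(u)\ss\overline{M}$ but not statement (i), which asserts $|\D|(\O_T\setminus M)=0$ and must handle $\overline{M}\setminus M$. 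Moreover, even where the scheme applies, the definition of (non-)$k$-cluster only produces, for each finite $k$, \emph{some} open set containing the given set with windows of length $>2k+1$; the open set depends on $k$, so for a fixed $U$ you only get $|\D|(U)\leq o(1)$ along a $k$-dependent family, not $=0$. The correct route (and the paper's) is to use inner regularity to reduce (i) to closed sets $F\ss\O_T\setminus M$: such an $F$ is itself an $\infty$-cluster (otherwise Zorn's lemma would place a minimal cluster inside $F$, contradicting $F\cap M=\emptyset$), and then for each $k$ one chooses an open $U_k\supset F$ with long gap-windows, runs your (correct) $\pi_p^K$ estimate together with \lem{l:ql}, and concludes $|\D|(F)\leq|\D|(U_k)\leq o(1)\ra 0$ as $k\ra\infty$. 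Your $L^1$/lower-semicontinuity remarks and the three-case estimate itself are fine and match the paper's computation; it is the per-point localization that cannot be repaired.

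There is also a smaller gap in your argument for $\overline{M}\ss\cap_{q\geq 1}\overline{\cup_{p>q}A_p}$. The cluster condition \eqref{cluster} constrains only open sets $U$ containing \emph{all} of $B$, so a neighborhood $U_b$ of a single point $b\in B$ disjoint from $\cup_{p>q_0}A_p$ does not by itself contradict $B$ being a finite $k$-cluster (unless $B\ss U_b$). The fix uses minimality: if such a $U_b$ existed, then $B'=B\setminus U_b$ is a proper closed subset, and since $U_b$ meets no $A_p$ with $p>q_0$, any open $V\supset B'$ has the same asymptotic gap structure as $V\cup U_b\supset B$, so $k(B')=k(B)$, contradicting the minimality of $B$; the degenerate case $B\ss U_b$ contradicts finiteness of $k(B)$ directly. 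With that correction the set-theoretic inclusion, and hence (ii), goes through as you intend.
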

\begin{proof}
For (i) it is enough to show that $\D_u(F) = 0$ for every closed subset of $\O_T \backslash M$. Since $F \cap B = \emptyset$ for any minimal $k$-cluster set $B$, and $F$ is closed, $F$ itself is not a $k$-cluster for any finite $k$. Hence, for every $k>0$ there exists an open $U \supset F$ and for any $N >0$ there exists a string $q'_N, ..., q''_N > N$ with $|q'_N - q_N''| > 2k +1$ such that  $A_q \cap U = \emptyset$, for all $q'_N \leq q \leq q''_N$. Let $q_N = [(q'_N+ q''_N)/2]$ and let $\phi \in C_0^\infty(U)$ be arbitrary. Then
\[
\int_{\O_T} \phi \pi_{q_N} dx dt \ra \int_{\O_T} \phi d\D_u,
\]
as $N \ra \infty$. On the other hand, by the same computations as in the proof of Theorem~\ref{t:main} and since there are no active regions in $U$ with indices near $q_N$, we obtain 
\[
\left| \int_{\O_T} \pi_{q_N}^k \phi dx dt \right| \leq C \s_{q'_N} \ra 0,
\]
as $N \ra \infty$. Thus, in view of Lemma~\ref{l:ql}, 
\[
\left| \int_{\O_T} \phi d\D_u \right| \leq o(1) \| \phi \|_\infty,
\]
where $o(1) \ra 0$ as $k \ra \infty$. This implies $|\D_u|(F) \leq |\D_u|(U) \leq o(1)$. Since $F$ is independent of $k$, letting $k \ra \infty$ we obtain the desired result.

The conclusions of (ii) are even more straightforward as seen from the argument above -- there is no clustering of active regions at all outside of $\overline{M}$.

\end{proof}


\end{document}